\apptocmd{\thebibliography}{\raggedright}{}{}
\newcommand{\R}{\mathbf{R}}
\newcommand{\Z}{\mathbf{Z}}
\newcommand{\N}{\mathbf{N}}
\newcommand{\F}{\mathbf{F}}
\newcommand{\E}{\mathbf{E}}
\newcommand{\A}{\mathbf{A}}
\newcommand{\C}{\mathbf{C}}
\newcommand{\norm}[1]{\| #1 \|}
\newcommand{\abs}[1]{\lvert #1 \rvert}
\renewcommand{\Re}{\text{Re}}
\renewcommand{\mod}[1]{\;(\operatorname{mod}\;#1)}
\newtheorem{thm}{Theorem}
\newtheorem{cor}{Corollary}
\newtheorem{lem}{Lemma}
\newtheorem{prop}{Proposition}
\theoremstyle{remark}
\newtheorem*{rem}{Remark}
\theoremstyle{definition}
\newtheorem*{defn}{Definition}
\title{Small gaps between configurations of prime polynomials}
\author{Hans Parshall}
\begin{document}

\begin{abstract}
	We find arbitrarily large configurations of irreducible polynomials over finite fields that are separated by low degree polynomials.  Our proof adapts an argument of Pintz from the integers, in which he combines the methods of Goldston-Pintz-Y\i ld\i r\i m and Green-Tao to find arbitrarily long arithmetic progressions of generalized twin primes.
\end{abstract}

\maketitle

\section{Introduction}

A beautiful theorem of Pintz \cite[Theorem 1]{pintz} asserts that, in the integers, there are arbitrarily long arithmetic progressions of generalized twin primes (that is, primes which are separated from another prime by some absolute constant).  This built on his work with Goldston and Y\i ld\i r\i m \cite{gpy}, in which they showed that, conditional on a version of the Elliot-Halberstam conjecture, there are infinitely many generalized twin primes.  Remarkably, Zhang \cite{zhang} was able to expand on their work to prove, unconditionally, the existence of infinitely many generalized twin primes.  Zhang's techniques were sufficient for Pintz to establish his theorem unconditionally as well (see \cite{pintz-polignac}).

Independently of Zhang (and one another), Maynard \cite{maynard} and Tao \cite{taoblog} were able to use more elementary arguments to significantly shrink the best known bound on small gaps between primes.  They were further able to establish bounded gaps between infinitely many sets of $m + 1$ consecutive primes for all $m \in \N$.  The combination of the methods of Zhang, Maynard, and Tao led to the state of the art in this direction by the Polymath project \cite{polymath8b}.

Each of these results takes the common approach of proving an approximation to the Hardy-Littlewood $k$-tuples conjecture.  We call a finite set $\mathcal{H} \subseteq \Z$ admissible if for every prime $p$, we have $\mathcal{H} + (p) \neq \Z$.  Then the conjecture of Hardy and Littlewood states that, for any admissible $\mathcal{H}$, there exist infinitely many integers $n$ where the set $n + \mathcal{H} = \{n + h : h \in \mathcal{H}\}$ contains only primes.  Pintz proved his theorem by combining this approach with the proof by Green and Tao \cite{greentao} that the primes contain arbitrarily long arithmetic progressions.  By combining the work of Polymath \cite[Remark 32]{polymath8b} with one of Pintz' main results \cite[Theorem 5]{pintz}, one can deduce that many translates of any admissible $\mathcal{H}$ produce arbitrarily long arithmetic progressions of primes.

\begin{thm}\label{headline-int}
Let $m \in \N$, and let $k \in \N$ be taken sufficiently large with respect to $m$.  Then for any $\ell \in \N$ and any admissible $\mathcal{H} = \{h_1, \ldots, h_k\} \subseteq \Z$, there are infinitely many $n, d \in \N$ where at least $m + 1$ of the arithmetic progressions $h_j + \{n + id\}_{i = 0}^{\ell - 1}$ contain only primes.
\end{thm}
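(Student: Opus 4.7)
The plan is to combine the generalized Maynard--Tao sieve of Polymath with the Green--Tao transference principle, following Pintz \cite{pintz}. First I would reduce the theorem to the following statement: there is a fixed subset $J \subseteq \{1, \ldots, k\}$ of size $m+1$ such that
\[
S_J := \{ n \in \N : n + h_j \text{ is prime for every } j \in J \}
\]
contains arbitrarily long arithmetic progressions. This suffices because an $\ell$-term AP in $S_J$ with common difference $d$ furnishes $(n, d)$ for which at least $m+1$ of the progressions $h_j + \{n + id\}_{i = 0}^{\ell - 1}$ consist entirely of primes (namely those with $j \in J$). The reduction to a single fixed $J$ comes via pigeonhole over the $\binom{k}{m+1}$ candidate subsets.

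The density input is the Polymath extension of Maynard--Tao \cite[Remark 32]{polymath8b}: for $k$ large in terms of $m$ and for each large $N$, there exist nonnegative weights $w_N$ supported in $[N, 2N]$ with
\[
\sum_n w_N(n) \Bigl( \sum_{j = 1}^k \mathbf{1}_P(n + h_j) - m \Bigr) \gg \sum_n w_N(n).
\]
Expanding the indicator of $\{n : \sum_j \mathbf{1}_P(n + h_j) \geq m + 1\}$ into a sum over $(m+1)$-subsets and pigeonholing yields some $J = J_N$ with $\sum_n w_N(n) \mathbf{1}_{S_J}(n) \gg_{k, m} \sum_n w_N(n)$. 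Passing to a subsequence along which $J_N$ is constant fixes a single $J$ and establishes positive weighted density for $S_J$ along the corresponding sequence of scales.

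Finally, I would appeal to Pintz's adaptation of Green--Tao \cite[Theorem 5]{pintz} to convert this weighted density into arbitrarily long APs in $S_J$. The underlying strategy is to construct a pseudorandom measure (in the sense of linear forms and correlation conditions) relative to which $S_J$, or an appropriate weighted version thereof, has positive density; since $w_N$ is a truncated divisor sum of Selberg type, it sits comfortably inside the Green--Tao framework and the relative Szemer\'edi theorem then produces the desired $\ell$-term APs. The main obstacle lies precisely here: in the $m + 1 = 2$ setting of \cite{pintz} the pseudorandomness verification is already delicate, and for general $m$ one must check that the additional flexibility in the Polymath weights, which is what allows $k$ to be chosen depending only on $m$, does not spoil the linear forms and correlation estimates. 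Because these estimates are governed by the Selberg-sieve skeleton shared across all of the constructions rather than by the particular smooth cutoffs used to tune the number of primes produced, I expect this extension to be technical but not conceptual.
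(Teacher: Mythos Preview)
Your proposal is correct and aligns with the paper's treatment: the paper does not give an independent proof of this integer statement but records it as a direct consequence of combining Polymath \cite[Remark 32]{polymath8b} with Pintz \cite[Theorem 5]{pintz}, exactly the two inputs you invoke. Your sketch is a faithful unpacking of that combination, and your diagnosis of the one nontrivial point---that the Polymath weights retain the Selberg-sieve structure needed for the linear forms and correlation estimates---is accurate.

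One difference worth noting: your pigeonhole reduction to a single $(m+1)$-subset $J$ is not how the paper (or Pintz) organizes the argument. If you look at the $\F_q[t]$ proof here, which is modeled on the integer argument, the set fed into the transference machinery is the full
\[
\mathcal{A} = \{\, n : \text{at least } m+1 \text{ of the } n + h_j \text{ are prime} \,\}
\]
intersected with an almost-prime set $\mathcal{P}_\epsilon(\mathcal{H})$, without ever fixing which $m+1$ shifts succeed. The pseudorandom measure is then built from the product of Goldston--Y{\i}ld{\i}r{\i}m divisor sums over \emph{all} $k$ shifts, and the concentration estimate (the analogue of \autoref{concentration}) ensures the Maynard--Tao weights already live on $\mathcal{P}_\epsilon(\mathcal{H})$. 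Your route via a fixed $J$ yields a nominally stronger conclusion (the same $m+1$ indices work for every $\ell$), but it costs you a subsequence argument and does not simplify the pseudorandomness verification, since the measure must still majorize weights sensitive to all $k$ shifts. Either route is fine; the paper's is slightly more direct.
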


With $m = 1$, Polymath was able to take $k = 50$ with an explicit admissible $\mathcal{H}$ to show the existence of infinitely many pairs of primes at most 246 apart \cite[Theorem 4]{polymath8b}.  The same $\mathcal{H}$ yields a quantitative improvement for long arithmetic progressions of generalized twin primes.

\begin{cor}\label{headline-int-quantitative}
For any $\ell \in \N$, there exists $h \in \N$ with $h \leq 246$ such that there exist infinitely many $n, d \in \N$ where both arithmetic progressions $\{n + id\}_{i = 0}^{\ell - 1}$ and $h + \{n + id\}_{i = 0}^{\ell - 1}$ contain only primes.
\end{cor}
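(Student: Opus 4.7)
The plan is to deduce this as a direct consequence of \autoref{headline-int} applied with $m = 1$. The key input is the explicit admissible set $\mathcal{H} = \{h_1, \ldots, h_{50}\} \subseteq \Z$ of diameter at most $246$ produced by Polymath in \cite[Theorem 4]{polymath8b}; the same theorem establishes that $k = 50$ is large enough with respect to $m = 1$ to make the hypothesis of \autoref{headline-int} valid, so I may feed this $\mathcal{H}$ directly into that theorem.

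Fix $\ell \in \N$. Applying \autoref{headline-int} yields infinitely many $(n, d) \in \N \times \N$ for which at least two of the progressions $h_j + \{n + id\}_{i = 0}^{\ell - 1}$ consist entirely of primes. The indices of the pair of successful progressions are drawn from the finite set of $\binom{50}{2}$ two-element subsets of $\{1, \ldots, 50\}$, so by the pigeonhole principle some fixed pair $\{j_1, j_2\}$, which I may arrange so that $h_{j_1} < h_{j_2}$, is witnessed infinitely often. Setting $h = h_{j_2} - h_{j_1}$ and replacing $n$ by $n + h_{j_1}$ then produces infinitely many $(n, d)$ for which both progressions $\{n + id\}_{i = 0}^{\ell - 1}$ and $h + \{n + id\}_{i = 0}^{\ell - 1}$ contain only primes. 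The diameter bound on $\mathcal{H}$ gives $h \leq 246$.

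There is no genuine obstacle here beyond the pigeonhole bookkeeping: the hard analytic content is already packaged in \autoref{headline-int} and in Polymath's explicit construction of $\mathcal{H}$. The only subtlety is ensuring that the admissible tuple used in \autoref{headline-int} is the very one optimized by Polymath for the bound $246$, rather than any generic tuple of size $50$.
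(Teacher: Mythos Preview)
Your proposal is correct and matches the paper's own approach: the paper does not give a separate proof of this corollary but simply remarks that applying \autoref{headline-int} with $m=1$, $k=50$, and the explicit Polymath admissible set of diameter $246$ yields the result. Your write-up supplies the natural pigeonhole step needed to fix a single $h$ working for infinitely many $(n,d)$, which the paper leaves implicit.
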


In this paper, we are interested in proving comparable results for polynomials over a finite field. From now on, we will work in $\F_q[t]$ with $q$ a fixed prime power.  The notion of an $\ell$-term arithmetic progression will be replaced by an $\ell$-configuration, defined for $f,g \in \F_q[t]$ by 
\[
\mathcal{C}_\ell(f,g) = \{f + gh : \deg(h) < \ell\},
\]
and by a prime in $\F_q[t]$ we mean a monic irreducible polynomial.  The analogue of the Green-Tao theorem in $\F_q[t]$ was established by L\^e \cite{le}, in which he proved that there are arbitrarily large $\ell$-configurations containing only primes. As in the integers, we call a finite set $\mathcal{H} \subseteq \F_q[t]$ admissible provided that, for every prime $p \in \F_q[t]$, we have $\mathcal{H} + (p) \neq \F_q[t]$.  Our main result is the following.
\begin{thm}\label{headline}
Let $m \in \N$, and let $k \in \N$ be taken sufficiently large with respect to $m$. Then for any $\ell \in \N$ and any admissible $\mathcal{H} = \{h_1, \ldots, h_k\} \subseteq \F_q[t]$, there are infinitely many $f,g \in \F_q[t]$ with $g \neq 0$ such that at least $m + 1$ of the $\ell$-configurations $h_j + \mathcal{C}_\ell(f,g)$ contain only primes.
\end{thm}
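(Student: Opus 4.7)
The plan is to mimic Pintz's integer argument in $\F_q[t]$: build a Maynard--Tao style multidimensional sieve to produce tuples in which many shifts $n + h_j$ are simultaneously prime, pigeonhole to fix the subset of $j$'s for which primality occurs, and then feed the resulting relatively dense set into the function-field transference principle underlying L\^e's theorem to upgrade it to an $\ell$-configuration.

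More concretely, for a large box $B \subseteq \F_q[t]$ (say $B$ is the set of monic polynomials of some large degree $N$), I would construct a nonnegative Maynard--Tao weight $W_{\mathcal{H}}: \F_q[t] \to \R_{\geq 0}$, supported on $B$ and built from a tensor-product divisor-sum sieve tailored to the admissible tuple $\mathcal{H}$. The first quantitative step is a DHL[$k, m+1$]-style inequality in $\F_q[t]$: for $k$ sufficiently large in terms of $m$,
\[
\sum_{n \in B} W_{\mathcal{H}}(n) \left( \sum_{j = 1}^{k} \mathbf{1}_{n + h_j \text{ prime}} - m \right) \; > \; 0.
\]
This follows the standard Maynard--Tao moment computation, now carried out in $\F_q[t]$, using that an unconditional Bombieri--Vinogradov theorem is available in the function field setting (with level of distribution essentially one, which is more than enough); admissibility of $\mathcal{H}$ controls the local factors at small primes of $\F_q[t]$.

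Given the above, for each subset $S \subseteq \{1, \ldots, k\}$ with $|S| = m + 1$, let $N_S = \{n \in B : n + h_j \text{ is prime for every } j \in S\}$. The sieve inequality together with pigeonholing over the $\binom{k}{m+1}$ possible $S$ yields a fixed $S$ for which $\sum_{n \in N_S} W_{\mathcal{H}}(n)$ is a positive proportion of the total sieve mass. Since $W_{\mathcal{H}}$ is majorized (up to a bounded constant) by a truncated-divisor pseudorandom measure $\nu$ on $\F_q[t]$, this in turn forces $N_S$ to have positive density inside $B$ relative to $\nu$.

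The final step is a relative Szemer\'edi / transference theorem: any subset of $\F_q[t]$ of positive density with respect to a pseudorandom majorant $\nu$ (satisfying the L\^e-style linear-forms and correlation conditions) contains an $\ell$-configuration $\mathcal{C}_\ell(f, g)$ with $g \neq 0$. Applied to $N_S$, this produces $(f,g)$ such that for every $j \in S$ the entire set $h_j + \mathcal{C}_\ell(f, g)$ consists of primes, giving the desired $m + 1$ prime $\ell$-configurations. Letting $\deg$ of the ambient box grow yields infinitely many such pairs. The main obstacle is the pseudorandomness verification: one must check that the Maynard--Tao majorant $\nu$ satisfies the correct linear-forms and correlation estimates in $\F_q[t]$, uniformly in parameters, so that it can substitute for L\^e's original prime-centric majorant in the relative Szemer\'edi step; this is the function-field analogue of the most technical part of Pintz's argument and will occupy the bulk of the proof.
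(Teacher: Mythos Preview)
Your high-level outline matches the paper's, but there is a genuine gap at the interface between the sieve step and the transference step. You assert that the Maynard--Tao weight $W_{\mathcal{H}}$ ``is majorized (up to a bounded constant) by a truncated-divisor pseudorandom measure $\nu$,'' and later that the main obstacle is verifying pseudorandomness of this $\nu$. Neither claim survives scrutiny. The multidimensional sieve weight $\bigl(\sum_{d_j\mid n+h_j}\lambda_{d_1,\dots,d_k}\bigr)^2$ is built from a non-factoring $F$ on the simplex, and there is no known pointwise inequality bounding it by a product of one-dimensional Goldston--Y{\i}ld{\i}r{\i}m sums; nor is the Maynard--Tao weight itself known to satisfy the linear-forms condition needed for transference. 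This is precisely the obstruction Pintz had to overcome in the integers, and it is not a matter of ``verifying pseudorandomness'' but of replacing the weight altogether.

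The paper (following Pintz and Polymath) resolves this with a concentration estimate: one shows that the sieve mass on those $f$ for which $\prod_j(f+h_j)$ has a prime factor of degree below $\epsilon n$ is $O(\epsilon)$ times the total mass (Proposition~\ref{concentration} here). Thus the DHL inequality still holds after restricting to the almost-prime set $\mathcal{P}_\epsilon(\mathcal{H})$. On that set each $f+h_j$ has $O(1/\epsilon)$ prime factors, so the inner divisor sum has at most $2^{k/\epsilon}$ terms and the normalized Maynard--Tao weight is bounded by an absolute constant. This converts ``positive sieve mass'' into an honest cardinality lower bound on $\mathcal{A}=\{f\in\mathcal{P}_\epsilon(\mathcal{H}): \text{at least }m{+}1\text{ of the }f+h_j\text{ prime}\}$, and one then takes $\phi$ to be a (scaled) indicator of $\mathcal{A}$, majorized by the \emph{product} measure $\nu=\prod_j \nu_j$ with $\nu_j$ the ordinary one-variable GY weight at $h_j$; pseudorandomness of this $\nu$ follows almost verbatim from L\^e. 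Your pigeonhole on $S$ is harmless but unnecessary; what you are missing is the almost-prime concentration step. A smaller omission: the relative Szemer\'edi theorem is applied in $\F_{q^n}$, and one must argue (by first passing to a box of diameter $<n-\ell$) that the resulting configuration is an $\ell$-configuration in $\F_q[t]$ rather than only in $\F_{q^n}$.
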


We do not attempt to reproduce the strongest quantitative results of Polymath, which would require adapting the impressive work of Zhang to $\F_q[t]$. Instead, we settle for the relationship between $m$ and $k$ established in \cite[Theorem 23]{polymath8b}, taking $k \geq 54$ when $m = 1$. Then as an immediate corollary to \autoref{headline}, we can obtain $\ell$-configurations of ``twin primes'' by noting that for any degree $d \geq 0$, the set $\{\alpha t^d : \alpha \in \F_q^\times\}$ is admissible.

\begin{cor}\label{headline-quant}
Let $\ell \in \N$. If $q \geq 55$, then for any degree $d \geq 0$, there exists a monomial $h \in \F_q[t]$ with degree $d$ such that there exist infinitely many $f,g \in \F_q[t]$ with $g \neq 0$ where both $\ell$-configurations $\mathcal{C}_\ell(f,g)$ and $h + \mathcal{C}_\ell(f,g)$ contain only primes.
\end{cor}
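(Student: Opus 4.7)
My plan is to derive this corollary directly from \autoref{headline} applied with $m = 1$, using the admissible set mentioned just before the statement. The hypothesis $q \geq 55$ lets me take $k = q - 1 \geq 54$, which meets the threshold the paper inherits from \cite[Theorem 23]{polymath8b}. The candidate admissible set is $\mathcal{H} = \{\alpha t^d : \alpha \in \F_q^\times\}$, a set of exactly $k$ distinct monomials of degree $d$.

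The first thing to verify is that $\mathcal{H}$ is admissible. This is essentially free: for any prime $p \in \F_q[t]$, the residue ring $\F_q[t]/(p)$ has cardinality $q^{\deg p} \geq q$, which strictly exceeds $\abs{\mathcal{H}} = q - 1$. Hence the image of $\mathcal{H}$ in $\F_q[t]/(p)$ cannot be surjective, so $\mathcal{H} + (p) \neq \F_q[t]$. Invoking \autoref{headline} now produces infinitely many $(f, g)$ with $g \neq 0$ such that at least two of the configurations $\alpha t^d + \mathcal{C}_\ell(f, g)$, indexed by $\alpha \in \F_q^\times$, consist entirely of primes.

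The remaining step is a pigeonhole argument. \autoref{headline} does not pin down which pair of indices succeeds for a given $(f, g)$, but there are only $\binom{q - 1}{2}$ unordered pairs, so some single pair $\{\alpha, \beta\} \subseteq \F_q^\times$ with $\alpha \neq \beta$ is selected by infinitely many of the $(f, g)$. For each such $(f, g)$, I reparametrize by $f' = f + \alpha t^d$ and set $h = (\beta - \alpha) t^d$, a nonzero monomial of degree $d$. Then $\mathcal{C}_\ell(f', g) = \alpha t^d + \mathcal{C}_\ell(f, g)$ and $h + \mathcal{C}_\ell(f', g) = \beta t^d + \mathcal{C}_\ell(f, g)$ both consist entirely of primes, as required. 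There is no genuine obstacle in this deduction beyond the admissibility check, which is why the author can describe the statement as an immediate corollary.
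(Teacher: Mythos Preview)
Your proof is correct and follows exactly the route the paper sketches: apply \autoref{headline} with $m=1$ to the admissible set $\{\alpha t^d:\alpha\in\F_q^\times\}$, then extract a fixed monomial $h$ by pigeonhole and a translation. The paper merely asserts admissibility and calls the result immediate; you have supplied the straightforward missing details (the cardinality argument for admissibility, the pigeonhole to fix a pair $\{\alpha,\beta\}$, and the reparametrization $f\mapsto f+\alpha t^d$, $h=(\beta-\alpha)t^d$), all of which are sound.
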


The argument of Pintz in \cite{pintz} splits nicely into two steps.  First, he uses the Goldston-Pintz-Y\i ld\i r\i m method to show that the set of almost-prime translates of an admissible $\mathcal{H} \subseteq \Z$ with at least two primes form a positive density subset of the almost-prime translates of $\mathcal{H}$.  In \autoref{densitysection}, we will mimic this step by following the recent work on bounded gaps between primes in the integers, especially following Tao \cite{taoblog}.  Second, Pintz generalizes the argument of Green and Tao from subsets of almost-primes to subsets of almost-prime translates of $\mathcal{H}$.  In \autoref{transsection}, we will similarly generalize the argument of L\^e.  We combine these ingredients in \autoref{proofmain} to give the proof of \autoref{headline}.

\section{Setup}\label{setup}

It will be convenient to have a common setup throughout this paper.  Fix an integer $m \in \N$. From \cite[Theorem 23]{polymath8b}, for all $k \in \N$ taken sufficiently large with respect to $m$ (with $k \geq 54$ when $m = 1$), we can fix a smooth $F : [0,\infty)^k \rightarrow \R$, supported on the $k$-simplex $\{\mathbf{t} \in [0,1]^k : t_1 + \cdots + t_k \leq 1\}$, with $F$ symmetric in the variables $t_1, \ldots, t_k$ and $k \beta_F / \alpha_F > 4m$, where $\alpha_F$ and $\beta_F$ are the positive real numbers depending only on $F$ defined in \autoref{asymptotic}. Fixing such a $k$ and $F$, we also fix $\eta \in (0,1/2)$ sufficiently close to 1/2 so that $k\beta_F / \alpha_F > 2m / \eta$.  This crucial inequality relating $k, F, m$ and $\eta$ will be important in the proof of \autoref{positivedensity}.  Finally, we fix an admissible $\mathcal{H} = \{h_1, \ldots, h_k\} \subseteq \F_q[t]$ with $\deg(h_1) \leq \cdots \leq \deg(h_k)$.  Constants implied by big-$O$ notation and its shorthand $\ll$ will often depend on these fixed quantities.

The setting for the arguments to follow will be 
\[
\A_n = \{f \in \F_q[t] : f \text{ monic with } \deg(f) = n\}
\]
for large degrees $n \in \N$.  When considering divisors of polynomials in $\A_n$, we will often restrict our attention to polynomials of degree up to $r = \eta n$.  We will denote the set of primes in $\F_q[t]$ by $\mathcal{P}$, and to avoid complications from small primes, we will take a sufficiently large degree $w \in \N$ and work modulo 
\[
	W = \prod_{\substack{p \in \mathcal{P} \\ \deg(p) < w}} p.
\]  
Although we will eventually need to fix $w$ in the proof of \autoref{headline}, it is helpful to think of $w$ tending to infinity up until that point, and we use the asymptotic notation $o(1)$ for a quantity tending to zero as $w \rightarrow \infty$.  Since $\mathcal{H}$ is admissible, we can take $w > \deg(h_k)$ and fix a congruence class $b \mod{W}$ with $(W, b + h_j) = 1$ for each $h_j \in \mathcal{H}$. We allow $n \rightarrow \infty$, and we always insist that $w \ll \log \log n$ so that $\deg(W) \ll \log n$ and terms that tend to zero as $n \rightarrow \infty$ are also $o(1)$.

For a friendly introduction to the subject of number theory in function fields, see Rosen's excellent text \cite{rosen}.  We will need some notions in $\F_q[t]$ that are similar to their integer counterparts, such as the prime number theorem \cite[Theorem 2.2]{rosen} and Dirichlet's theorem \cite[Theorem 4.8]{rosen}. We will use some familiar arithmetic functions on $\F_q[t]$, namely the norm $\abs{f} = q^{\deg(f)}$, the Euler totient function $\varphi(f)$, the M\"obius function $\mu(f)$, and a weighted characteristic function of $\mathcal{P}$, $\theta(f) = \deg(f) 1_{\mathcal{P}}(f)$.  In our asymptotics, a large role is played by the zeta function
\[
	\zeta(s) = \sum_{\substack{f \in \F_q[t] \\ f \text{ monic}}} \frac{1}{|f|^s},
\]
which (like the Riemann zeta function) can be continued to a meromorphic function on $\C$ with a simple pole at $s = 1$ with residue $1/\log(q)$.  For $\Re(s) > 1$, we will use the Euler product expansion
\[
	\zeta(s) = \prod_{p \in \mathcal{P}} \left(1 - \frac{1}{\abs{p}^s}\right)^{-1}.
\]

\section{The Density Argument}\label{densitysection}

For $\epsilon > 0$, we define the almost-prime translates of $\mathcal{H}$ by
\[
	\mathcal{P}_\epsilon(\mathcal{H}) = \left\{f \in \F_q[t] : P^-\left(\prod_{j = 1}^k (f + h_j)\right) \geq \epsilon n\right\},
\]
where $P^-(f)$ denotes the least degree of the prime factors of $f \in \F_q[t]$.  Our goal in this section is to show that many $f \in \mathcal{P}_\epsilon(\mathcal{H})$ result in $m + 1$ primes among the $f + h_j$. More precisely, we will prove the following theorem.

\begin{thm}\label{positivedensity}
	Fix $w \in \N$ sufficiently large and $\epsilon \in (0,1)$ sufficiently small.  Then there exists some $a \in (0,1)$ such that, for all $n \in \N$ with $\log\log(n) \geq w$, the set 
	\[
		\mathcal{A} = \left\{f \in \mathcal{P}_\epsilon(\mathcal{H}) \cap \A_n : f \equiv b \mod{W} \text{ and at least } m + 1 \text{ of the } f + h_j \text{ are prime}\right\}
	\]
	satisfies
	\begin{equation}\label{positivedensitybound}
		\abs{\mathcal{A}} \geq a \frac{\abs{W}^{k - 1}}{\varphi(W)^k} \frac{\abs{\A_n}}{n^k} 
	\end{equation}
\end{thm}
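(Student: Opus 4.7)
The plan is to adapt the Maynard-Tao sieve argument to $\F_q[t]$, following Tao \cite{taoblog}. Fix the sieve level of total degree $r/2$ with $r = \eta n$, and introduce the Selberg-type weight
\[
\nu(f) = \Bigl( \sum_{\substack{d_j \mid f + h_j \\ \sum_j \deg d_j \le r/2 \\ (d_j, W) = 1}} \lambdad \Bigr)^2,
\]
where $\lambdad$ is constructed from the smooth $F$ of \autoref{setup} in the standard way: a product of M\"obius factors times $F$ evaluated at the scaled degree vector $(2\deg d_1/r, \ldots, 2\deg d_k/r)$. The $k$-simplex support of $F$ keeps the weight at total degree at most $\eta n / 2 < n/4$, well inside the range where the prime polynomial theorem for progressions in $\F_q[t]$ gives strong equidistribution.

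The heart of the argument is the asymptotic evaluation of
\[
S_1 = \sum_{\substack{f \in \A_n \\ f \equiv b \mod{W}}} \nu(f), \qquad S_2 = \sum_{\substack{f \in \A_n \\ f \equiv b \mod{W}}} \nu(f) \sum_{j=1}^k \theta(f + h_j),
\]
which I would reduce to the forthcoming \autoref{asymptotic}. These sums should have main terms proportional to $\alpha_F$ and to $k \beta_F$ respectively, and the ratio $S_2 / (n S_1)$ approaches $k \beta_F \eta / (2 \alpha_F) + o(1)$. The setup inequality $k \beta_F / \alpha_F > 2m/\eta$ then gives $S_2 \ge (m + \delta)\, n S_1$ for some fixed $\delta > 0$, once $w$ is large and $n \to \infty$. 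Writing $\mathcal{B}$ for the set of $f \equiv b \mod{W}$ in $\A_n$ with at least $m + 1$ of the $f + h_j$ prime, the bounds $\sum_j \theta(f + h_j) \le m n$ off $\mathcal{B}$ and $\le k n$ on $\mathcal{B}$ force $\sum_{f \in \mathcal{B}} \nu(f) \gg S_1$.

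Next I would intersect with $\mathcal{P}_\epsilon(\mathcal{H})$. For $f \in \mathcal{B}$, the $m + 1$ prime translates automatically have no prime factors of degree less than $n$, so only the at most $k - m - 1$ remaining composite translates can violate the $\epsilon n$ lower bound. A Selberg upper-bound sieve in $\F_q[t]$, summed over primes $p$ of degree in $[w, \epsilon n]$ and over the at most $k - m - 1$ offending indices, bounds $\sum_{f \in \mathcal{B} \setminus \mathcal{A}} \nu(f) \le (C \epsilon + o(1)) S_1$; shrinking $\epsilon$ keeps $\sum_{f \in \mathcal{A}} \nu(f) \gg S_1$. A standard Maynard-Tao count argument (controlling $\nu$ in mean square against the same main-term shape) then converts this into the claimed lower bound \eqref{positivedensitybound}, whose form matches the Hardy-Littlewood heuristic count of $k$-tuples of primes $f + h_j$ with $f \equiv b \mod{W}$.

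The principal obstacle is the asymptotic computation of $S_1$ and $S_2$ that \autoref{asymptotic} is to package. Expanding the square in $S_1$ and interchanging summation reduces matters to sums of multiplicative functions over least common multiples of $d_j$ and $e_j$, which in $\F_q[t]$ succumb to the Euler product for $\zeta$ together with a suitable smoothing to introduce $F$ via a contour integral. For $S_2$ one additionally needs the count of primes in the progressions $b + h_j \mod{dW}$ for $d$ in the support of $\lambda$; since $\deg(dW) \le r/2 + \deg(W) \le \eta n/2 + O(\log n) < n/2$ for large $n$, the required equidistribution follows from the prime polynomial theorem for progressions in $\F_q[t]$ with room to spare, so no Bombieri-Vinogradov strength is needed and the errors are merely a matter of careful bookkeeping.
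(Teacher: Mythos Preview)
Your plan is structurally aligned with the paper's proof through the asymptotics for $S_1$ and $S_2$ (packaged there as \autoref{npsums}, derived from \autoref{asymptotic}) and the use of the inequality $k\beta_F/\alpha_F > 2m/\eta$. Two steps need to be made precise, and one of them hides the main idea you are missing.

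The final conversion from $\sum_{f \in \mathcal{A}} \nu(f) \gg S_1$ to a lower bound on $|\mathcal{A}|$ is not achieved by any ``standard Maynard--Tao count argument'' or by mean-square control of $\nu$; the basic Maynard--Tao method gives existence, not a count, and bounding $\sum \nu^2$ would be a separate nontrivial computation. The paper instead uses the almost-prime hypothesis to bound $\nu$ \emph{pointwise} on $\mathcal{A}$: for $f \in \mathcal{P}_\epsilon(\mathcal{H})$ each $f+h_j$ has degree $n$ with all prime factors of degree at least $\epsilon n$, hence at most $1/\epsilon$ prime factors, so each inner divisor sum has at most $2^{1/\epsilon}$ terms and $\nu(f) \le C(\epsilon,k,\|F\|_\infty)$. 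Combined with the trivial bound $\sum_j \theta(f+h_j) \le kn$, the weighted sum over $\mathcal{A}$ is at most $Cn|\mathcal{A}|$, which against the lower bound of order $|W|^{k-1}\varphi(W)^{-k}|\A_n|/n^{k-1}$ gives \eqref{positivedensitybound}. This pointwise divisor bound is exactly why one intersects with $\mathcal{P}_\epsilon(\mathcal{H})$ in the first place; it is the mechanism that turns the sieve inequality into a genuine count.

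Separately, the step you describe as ``a Selberg upper-bound sieve'' to pass from $\mathcal{B}$ to $\mathcal{A}$ is the content of \autoref{concentration}, and it is not an off-the-shelf sieve estimate. One must evaluate, for each prime $g$ with $w \le \deg g \le \epsilon n$, the sum $S_g = \sum_{g \mid f+h_1,\, f \equiv b \,(W)} \nu(f)$, which requires threading the extra divisibility condition through the same Euler-product/Fourier analysis that proves \autoref{asymptotic}; the outcome is $S_g \sim \gamma_F (\deg g)^2 |g|^{-1} r^{-2} \cdot (|W|^{k-1}/\varphi(W)^k)|\A_n|/r^k$, and summing over $g$ via the prime polynomial theorem gives the $O(\epsilon)$ saving. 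This is comparable in difficulty to \autoref{asymptotic} itself, so your one-line invocation understates what is needed.
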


\begin{rem}
	One can see from the proof that $w$ and $\epsilon$ depend only on our fixed quantities from \autoref{setup}.
\end{rem}

Except for the use of the crucial inequality in the proof of \autoref{positivedensity}, we do not use anything special about the Polymath function $F$.  In particular, our main asymptotics, contained in \autoref{npsums}, and our concentration estimate, contained in \autoref{concentration}, are valid for any smooth, symmetric function supported on the $k$-simplex.  One could even drop the symmetry condition by redefining our constants depending on $F$ a bit more carefully (see \cite[Lemma 41]{polymath8b}).

In what follows, the variables $d, d', d_1, \ldots, d_k, d_1', \ldots, d_k'$ will always denote monic polynomials in $\F_q[t]$ with $[d, d'], [d_1, d_1'], \ldots, [d_k,d_k'], W$ all pairwise coprime.  It will be convenient to introduce shorthand for our weights, supported on such $d$, by
\[
	\lambda_{d_1, \ldots, d_k} = \left(\prod_{j = 1}^k \mu(d_j)\right) F\left(\frac{\deg(d_1)}{r}, \ldots, \frac{\deg(d_k)}{r}\right).
\]
Similar weights were used in the integer setting in \cite{taoblog}, where Tao establishes the integer analogue of \autoref{npsums}.  These differ from the integer weights of Maynard in \cite{maynard}, but produce essentially the same results.  A version of \autoref{npsums} for Maynard's weights in $\F_q[t]$ appeared in \cite{chlopt}, but it is unclear how to adapt their setup to obtain a concentration estimate comparable to \autoref{concentration}.  Such an estimate was the heart of Pintz' argument in \cite{pintz} (see also \cite[Proposition 14]{polymath8b}).

We now state our main asymptotics and concentration estimate, and derive \autoref{positivedensity}.

\begin{prop}\label{npsums} Let $h_j \in \mathcal{H}$ and set
\begin{align*}
	S_1 &= \sum_{\substack{f \in \A_n \\ f \equiv b \mod{W}}} \left(\sum_{\substack{d_1, \ldots, d_k \\ d_j \mid f + h_j \forall j}} \lambda_{d_1, \ldots, d_k} \right)^2,\\
	S_2 &= \sum_{\substack{f \in \A_n \\ f \equiv b \mod{W}}} \theta(f + h_j) \left(\sum_{\substack{d_1, \ldots, d_k  \\ d_j \mid f + h_j \forall j}} \lambda_{d_1, \ldots, d_k} \right)^2. 
\end{align*}
Then with $\alpha_F$ and $\beta_F$ as in \autoref{asymptotic}, we have the asymptotics
\[
	S_1 = (\alpha_F + o(1)) \frac{\abs{W}^{k - 1}}{\varphi(W)^k} \frac{\abs{\A_n}}{r^k}  \hspace{44pt} S_2 = (\beta_F + o(1)) \frac{\abs{W}^{k - 1}}{\varphi(W)^k} \frac{\abs{\A_n}}{r^{k - 1}} 
\]
\end{prop}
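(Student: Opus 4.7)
The plan is to execute the standard Maynard--Tao sieve calculation in $\F_q[t]$. For both sums I first expand the inner square and interchange summation:
\[
S_1 = \sum_{d_j, d_j'} \lambdad \lambdae \#\{f \in \A_n : f \equiv b \mod{W},\ [d_j, d_j'] \mid f + h_j\ \forall j\},
\]
with $S_2$ identical except for an extra factor $\theta(f + h_j)$ inside the $f$-sum. Because the setup in \autoref{setup} forces the $[d_i, d_i']$ and $W$ to be pairwise coprime, the Chinese Remainder Theorem collapses the congruences on $f$ into a single residue class modulo $M = W \prod_j [d_j, d_j']$. Since $F$ is supported on the simplex $t_1 + \cdots + t_k \leq 1$, we have $\deg(M) \leq \deg(W) + 2r \leq w + 2\eta n$, strictly less than $n$ for large $n$ by $\eta < 1/2$.

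For $S_1$, the inner count equals $\abs{\A_n}/(\abs{W}\prod_j\abs{[d_j,d_j']}) + O(1)$, and the accumulated $O(1)$ error over the support of $\lambdad \lambdae$ is bounded by $O(q^{2r}) = O(q^{2\eta n}) = o(\abs{\A_n})$. The main term is
\[
\frac{\abs{\A_n}}{\abs{W}} \sum_{d_j, d_j'} \frac{\lambdad \lambdae}{\prod_j \abs{[d_j, d_j']}},
\]
which I would evaluate by the usual approach: write each factor $F(\deg(d_j)/r)$ as an inverse Mellin transform, interchange, and reduce the $d_j$-sums to Euler products of local zeta factors $(1 - \abs{p}^{-s})^{-1}$ restricted to primes coprime to $W$. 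Shifting contours and collecting residues at the simple pole of $\zeta$ at $s = 1$ extracts the factor $\abs{W}^k/\varphi(W)^k$ and the $r^{-k}$ scaling, with the remaining combinatorial constant being $\alpha_F$; this is precisely the content packaged in \autoref{asymptotic}.

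For $S_2$, the factor $\theta(f+h_j)$ vanishes unless $f + h_j$ is prime, in which case $d_j \mid f + h_j$ together with $\deg(d_j) \leq r < n$ forces $d_j = 1$, and similarly $d_j' = 1$. The $j$-th coordinate drops out, and the inner sum becomes a sum of $\theta(f+h_j)$ over an arithmetic progression to modulus $M_j = W \prod_{i \neq j}[d_i,d_i']$, automatically coprime to the progression by the choice of $b$. Here I invoke the prime number theorem for arithmetic progressions in $\F_q[t]$, which by the Riemann hypothesis for curves yields the main term $n\abs{\A_n}/\varphi(M_j)$ with a power-saving error that survives summation over the remaining $d_i, d_i'$ precisely because $2\eta < 1$. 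The surviving $k - 1$ coordinates are handled exactly as for $S_1$, producing the constant $\beta_F$ and the $r^{-(k-1)}$ scaling; the missing power of $r$ compared to $S_1$ reflects the collapse at the $j$-th coordinate.

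The main obstacle is the technical evaluation of the sums $\sum_{d,d'} \lambdad \lambdae / \prod_j \abs{[d_j, d_j']}$ into the named constants $\alpha_F$ and $\beta_F$. Working in $\F_q[t]$ is somewhat cleaner than in $\Z$ because $\zeta(s)$ is a rational function of $q^{-s}$ requiring no delicate analytic continuation, but one must still carefully manage the Mellin inversion of $F$, extract the correct local Euler factors at $s = 1$, and book-keep the contributions at primes dividing $W$. By contrast, the level of distribution --- the chief analytic obstacle in the integer analog --- is essentially free here thanks to the Weil bound.
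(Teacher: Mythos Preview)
Your proposal is correct and follows essentially the same route as the paper: expand the square, collapse the congruences via CRT and the support of $F$, invoke \autoref{asymptotic} for the resulting $(d_j,d_j')$-sums, and for $S_2$ use the Weil-strength prime number theorem in progressions after forcing $d_j = d_j' = 1$. One small simplification you are missing is that in $\F_q[t]$ the count of monics of degree $n$ in a residue class modulo $M$ with $\deg M \leq n$ is \emph{exactly} $\abs{\A_n}/\abs{M}$, so your $O(1)$ and its accumulated $O(q^{2r})$ error never arise (also note $\deg W \neq w$, though your conclusion $\deg M < n$ is still correct since $\deg W \ll \log n$).
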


\begin{rem}
	One can use \autoref{npsums}, together with \eqref{positivedensitybound}, to show that $f \in \mathcal{P}_\epsilon(\mathcal{H})$ with at least $m + 1$ of the $f + h_j$ prime form a set of positive upper density in $\mathcal{P}_\epsilon(\mathcal{H})$.
\end{rem}

\begin{prop}\label{concentration} For any $\epsilon \in (0,\eta)$,
\[
	\sum_{\substack{f \in \A_n \setminus \mathcal{P}_\epsilon(\mathcal{H}) \\ f \equiv b \mod{W}}} \left(\sum_{\substack{d_1, \ldots, d_k \\ d_j \mid f + h_j \forall j}} \lambda_{d_1, \ldots, d_k} \right)^2 \ll \epsilon \frac{\abs{W}^{k - 1}}{\varphi(W)^k} \frac{\abs{\A_n}}{r^k} 
\]
\end{prop}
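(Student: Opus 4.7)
The plan is to bound the sum by a union bound over the small prime factor of some $f + h_{j_0}$, and then exploit the smoothness of $F$ to extract a factor of $\deg(p)/r$ per such $p$.

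For the union bound, if $f \in \A_n \setminus \mathcal{P}_\epsilon(\mathcal{H})$ and $f \equiv b \mod{W}$, there must exist some $j_0 \in \{1,\ldots,k\}$ and a prime $p$ with $\deg(p) < \epsilon n$ such that $p \mid f + h_{j_0}$; since $(W, b + h_{j_0}) = 1$ we have $\deg(p) \geq w$, and since $w > \deg(h_k)$ the prime $p$ cannot divide $f + h_j$ for any other $j$. Writing $\mathcal{S}(f) = \sum_{d_j \mid f + h_j} \lambdad$, this gives
\[
\sum_{\substack{f \in \A_n \setminus \mathcal{P}_\epsilon(\mathcal{H}) \\ f \equiv b \mod{W}}} \mathcal{S}(f)^2 \;\leq\; \sum_{j_0 = 1}^{k} \sum_{\substack{p \in \mathcal{P} \\ w \leq \deg(p) < \epsilon n}} \sum_{\substack{f \in \A_n \\ f \equiv b \mod{W} \\ p \mid f + h_{j_0}}} \mathcal{S}(f)^2.
\]

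For fixed $(j_0, p)$ and $f$ with $p \mid f + h_{j_0}$, split the inner sum in $\mathcal{S}(f)$ by whether $p \mid d_{j_0}$ (note that $\deg p > \deg h_k$ forces $p \nmid d_j$ for $j \neq j_0$). Writing $d_{j_0} = p d''$ in the latter case and using $\mu(p d'') = -\mu(d'')$ together with the equivalence $d'' \mid f + h_{j_0} \iff p d'' \mid f + h_{j_0}$, one obtains
\[
\mathcal{S}(f) \;=\; \sum_{\substack{d_j \mid f + h_j \\ p \nmid d_{j_0}}} \Bigl(\prod_{j=1}^{k} \mu(d_j)\Bigr) \Bigl[\, F\bigl(\tfrac{\deg d_1}{r}, \ldots, \tfrac{\deg d_k}{r}\bigr) - F\bigl(\ldots, \tfrac{\deg d_{j_0} + \deg p}{r}, \ldots\bigr)\,\Bigr].
\]
Writing the bracket as $-\tfrac{\deg p}{r} \int_0^1 \partial_{j_0} F\bigl(\ldots, \tfrac{\deg d_{j_0}}{r} + s\tfrac{\deg p}{r}, \ldots\bigr) \, ds$ and applying Cauchy--Schwarz in $s$ yields
\[
\mathcal{S}(f)^2 \;\leq\; \frac{(\deg p)^2}{r^2} \int_0^1 \mathcal{S}_{G_{p,s}}(f)^2 \, ds,
\]
where $\mathcal{S}_{G_{p,s}}$ denotes the same sum with $F$ replaced by the smooth function $G_{p,s}(t_1,\ldots,t_k) = \partial_{j_0} F(\ldots, t_{j_0} + s\tfrac{\deg p}{r}, \ldots)$ and with $d_{j_0}$ restricted to $p \nmid d_{j_0}$. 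The next step is to re-run the proof of \autoref{npsums} for $S_1$ with $F$ replaced by $G_{p,s}$ and with the additional divisibility condition $p \mid f + h_{j_0}$. This extra condition restricts $f$ to a residue class modulo $Wp$, modifying only the local factor at $p$ in the resulting Euler product and producing an overall savings of $1/\abs{p}$. Uniformly in $p$ and $s$, one expects
\[
\sum_{\substack{f \in \A_n \\ f \equiv b \mod{W} \\ p \mid f + h_{j_0}}} \mathcal{S}_{G_{p,s}}(f)^2 \;\ll\; \frac{1}{\abs{p}} \cdot \frac{\abs{W}^{k - 1}}{\varphi(W)^k} \cdot \frac{\abs{\A_n}}{r^k}.
\]
Combining everything and using the prime polynomial theorem ($\sum_{\deg p = \ell} 1/\abs{p} \ll 1/\ell$),
\[
\sum_{\substack{p \in \mathcal{P} \\ w \leq \deg p < \epsilon n}} \frac{(\deg p)^2}{r^2 \abs{p}} \;\ll\; \frac{1}{r^2} \sum_{\ell = w}^{\epsilon n - 1} \ell \;\ll\; \frac{(\epsilon n)^2}{r^2} = \frac{\epsilon^2}{\eta^2} \;\ll\; \epsilon,
\]
using $r = \eta n$ and $\epsilon < \eta$; absorbing the sum over the $k$ choices of $j_0$ into the implicit constant yields the claim.

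The main obstacle is the uniform-in-$p$ estimate for $\sum \mathcal{S}_{G_{p,s}}^2$: although the argument parallels that of \autoref{npsums}, one must track the contribution of the modified weight $G_{p,s}$ and of the extra local constraint at $p$ carefully, so that all $p$-dependence is packaged into the single factor $1/\abs{p}$ with an implicit constant depending only on $\|F\|_{C^1}$ and on the data fixed in \autoref{setup}. Once that uniform bound is in place, the remaining manipulations are the standard ``derivative trick'' concentration estimate of Pintz \cite{pintz} and Polymath \cite[Proposition 14]{polymath8b}, transported to $\F_q[t]$.
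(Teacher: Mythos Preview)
Your approach is correct and follows the same overall skeleton as the paper (union bound over the offending small prime, extract a saving of $(\deg p)^2/(r^2\abs{p})$ for each such prime, then sum via the prime polynomial theorem), but the mechanism for extracting the saving is genuinely different. The paper keeps the original weight $F$ throughout: it expands the square, passes to the Fourier representation of $F$, and isolates the local Euler factor $K_g$ at the fixed prime; the factor $(\deg g)^2/r^2$ then emerges from the Taylor expansion of $1+K_g(x,x')$ near the pole, and the resulting integral is identified with $\gamma_F = \int (\partial^2_{t_1}\partial_{t_2}\cdots\partial_{t_k}F)^2$. You instead use the derivative trick of Pintz/Polymath: splitting on $p\mid d_{j_0}$ versus $p\nmid d_{j_0}$ and writing the difference as $-\tfrac{\deg p}{r}\int_0^1 \partial_{j_0}F(\ldots)\,ds$ pulls out the factor $\deg p/r$ \emph{before} any Euler products, at the price of replacing $F$ by the $(p,s)$-dependent weight $G_{p,s}$.

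What each route buys: the paper's computation never changes the weight and so avoids any uniformity issues, but requires carrying out the more delicate local analysis of $K_g$. Your route is more modular---it reduces everything to a black-box $S_1$-type estimate---but, as you correctly flag, one must check that this estimate holds uniformly in $(p,s)$. That uniformity is genuine but routine: the shift $s\,\deg p/r$ lies in $[0,1]$ (since $\deg p<\epsilon n<\eta n=r$), so the family $\{G_{p,s}\}$ is bounded in every $C^N$-norm by the corresponding $C^{N+1}$-norm of $F$, and the constraint $p\nmid d_{j_0}$ simply deletes one Euler factor while $p\mid f+h_{j_0}$ contributes the $1/\abs{p}$ via the Chinese remainder theorem. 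Note also that $G_{p,s}$ is neither symmetric nor supported exactly on the simplex, but the $S_1$ argument only uses smoothness and compact support, so this causes no trouble.
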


\begin{proof}[Proof of \autoref{positivedensity}] We will estimate
\begin{equation}\label{maincount}
	\sum_{\substack{f \in \A_n \cap \mathcal{P}_\epsilon(\mathcal{H}) \\ f \equiv b \mod{W}}} \left(\sum_{j = 1}^k \theta(f + h_j) - mn\right) \left(\sum_{\substack{d_1, \ldots, d_k \\ d_j \mid f + h_j \forall j}} \lambda_{d_1, \ldots, d_k}\right)^2,
\end{equation}
since the summands are positive exactly for $f \in \mathcal{P}_\epsilon(\mathcal{H})$ with at least $m + 1$ of the $f + h_j$ prime.  A lower bound for \eqref{maincount} is given by
\begin{align*}
	\sum_{\substack{f \in \A_n \\ f \equiv b \mod{W}}}&\left(\sum_{j = 1}^k \theta(f + h_j) - mn\right) \left(\sum_{\substack{d_1, \ldots, d_k \\ d_j \mid f + h_j \forall j}} \lambda_{d_1, \ldots, d_k}\right)^2\\
	&- \sum_{\substack{f \in \A_n \setminus \mathcal{P}_\epsilon(\mathcal{H}) \\ f \equiv b \mod{W}}} \left(\sum_{j = 1}^k \theta(f + h_j)\right) \left(\sum_{\substack{d_1, \ldots, d_k \\ d_j \mid f + h_j \forall j}} \lambda_{d_1, \ldots, d_k}\right)^2,
\end{align*}
which by \autoref{npsums} and \autoref{concentration} is at least
\[
	\left(k(\beta_F - o(1))  - \frac{m(\alpha_F + o(1))}{\eta} - O(\epsilon)\right) \frac{\abs{W}^{k - 1}}{\varphi(W)^k} \frac{\abs{\A_n}}{r^{k - 1}} .
\]
Taking $w$ sufficiently large and $\epsilon$ sufficiently small, our choice of $F$ guarantees that \eqref{maincount} is at least
\[
	c\frac{\abs{W}^{k - 1}}{\varphi(W)^k} \frac{\abs{\A_n}}{n^{k - 1}} 
\]
for some $c > 0$.  A crude upper bound for \eqref{maincount} is given by
\[
	Cn\sum_{f \in \mathcal{A}} \left(\sum_{\substack{d_1, \ldots, d_k \\ d_j \mid f + h_j \forall j}} 1 \right)^2
\]
for some $C > 0$.  We need only consider $d_j$ built out of the prime divisors of $\prod_{j = 1}^k (f + h_j)$, of which there are at most $1/\epsilon$.  Then there are at most $2^{1/\epsilon}$ such choices for each $d_j$, so our upper bound for \eqref{maincount} is at most $Cn\abs{\mathcal{A}}$ for some new $C > 0$, establishing \eqref{positivedensitybound}.
\end{proof}

It remains to prove both \autoref{npsums} and \autoref{concentration}.  In \autoref{eulerprod} below, we give some reasonably standard Euler product estimates that will be common to our arguments.

\begin{lem}\label{eulerprod} For $x, x' \in \R$, set
\[
	K(x, x') = \prod_{\substack{p \in \mathcal{P} \\ p \nmid W}} \left(1 + \frac{K_p(x,x')}{\abs{p}}\right)
\]
where
\[
	K_p(x, x') = \sum_{\substack{d,d' \\ [d,d'] = p}} \frac{\mu(d)\mu(d')}{|d|^{\frac{1 + ix}{r \log q}}|d'|^{\frac{1 + ix'}{r \log q}}}
\]
Then $K(x,x') = O(r^3)$ uniformly in $x, x'$.  Further, for $\abs{x}, \abs{x'} \leq \sqrt{r}$, we have the asymptotic
\[
	K(x,x') = \left(1 + o(1)\right) \frac{\abs{W}}{\varphi(W) r} \frac{(1 + ix)(1 + ix')}{2 + ix + ix'}.
\]
\end{lem}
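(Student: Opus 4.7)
The plan is to recognize $K(x,x')$ as a standard Euler-product approximation to a ratio of function-field zeta values, then evaluate those zetas explicitly via the closed form $\zeta(s) = (1 - q^{1-s})^{-1}$ in $\F_q[t]$. Writing $s_1 = (1+ix)/(r\log q)$ and $s_2 = (1+ix')/(r\log q)$, I would first compute $K_p(x,x')$ by noting that the pairs of monic $d, d'$ with $[d,d'] = p$ are exactly $(1,p)$, $(p,1)$, $(p,p)$, giving $K_p(x,x') = -|p|^{-s_1} - |p|^{-s_2} + |p|^{-(s_1+s_2)}$.

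Next I would verify the key algebraic identity
\[
1 + \frac{K_p(x,x')}{|p|} = \frac{\bigl(1 - |p|^{-(1+s_1)}\bigr)\bigl(1 - |p|^{-(1+s_2)}\bigr)}{1 - |p|^{-(1+s_1+s_2)}} \bigl(1 + O(|p|^{-2})\bigr),
\]
uniform in $x, x'$, by expanding the right-hand side as a geometric series in $|p|^{-(1+s_1+s_2)}$. The leading terms of order $|p|^{-1}$ match precisely, and the residual product $\prod_{p \nmid W}(1 + O(|p|^{-2}))$ is $O(1)$ by absolute convergence of $\sum_p |p|^{-2}$. Applying the Euler-product identity
\[
\prod_{p \nmid W}(1 - |p|^{-s}) = \zeta(s)^{-1} \prod_{p \mid W}(1 - |p|^{-s})^{-1}
\]
to the three factors then collapses the main product into
\[
\frac{\zeta(1+s_1+s_2)}{\zeta(1+s_1)\zeta(1+s_2)} \prod_{p \mid W} \frac{1 - |p|^{-(1+s_1+s_2)}}{\bigl(1-|p|^{-(1+s_1)}\bigr)\bigl(1-|p|^{-(1+s_2)}\bigr)}.
\]

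For the stated asymptotic on $|x|, |x'| \leq \sqrt{r}$, I would Taylor-expand $\zeta(1+s) = (1 - e^{-s\log q})^{-1}$ around $s = 0$: since $|s_j \log q| \leq \sqrt{1 + r}/r = O(1/\sqrt{r})$, one obtains $\zeta(1+s_j) = (r/(1+ix_j))(1 + o(1))$ and $\zeta(1+s_1+s_2) = (r/(2 + ix + ix'))(1 + o(1))$. For the finite product over $p \mid W$, the bound $|s_j \deg p| \leq w/r = o(1)$ (using $w \ll \log\log n \ll r$) yields $(1 - |p|^{-(1+s_j)}) = (1 - |p|^{-1})(1 + o(1))$, so the product simplifies to $\prod_{p \mid W}(1 - |p|^{-1})^{-1}(1 + o(1)) = |W|/\varphi(W) \cdot (1 + o(1))$. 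Combining these yields the asymptotic.

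The uniform bound $K(x,x') = O(r^3)$ is coarse and follows from $|\zeta(1+s_j)| \leq (1 - e^{-1/r})^{-1} \ll r$ together with $|\zeta(1+s_j)|^{-1} = |1 - e^{-(1+ix_j)/r}| \leq 2$, giving the zeta ratio as $O(r)$; combined with $|W|/\varphi(W) \ll w \ll \log r$ and the $O(1)$ residual product, this is more than enough. The main care point is the algebraic identity: one must confirm the discrepancy there is $O(|p|^{-2})$ rather than $O(|p|^{-1})$, since only the former product converges over primes. Everything else is a routine function-field translation of the standard integer calculation.
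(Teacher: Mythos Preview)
Your proof takes essentially the same approach as the paper's: the same explicit formula for $K_p$, the same algebraic identity expressing $1 + K_p/|p|$ as a ratio of local Euler factors times $1+O(|p|^{-2})$, and the same reduction to the behavior of $\zeta$ near $s=1$ together with a separate accounting of the primes $p\mid W$ (the paper packages the latter into a modified $\zeta_W$, but this is cosmetic). Two small points to tighten: since $|x_j|\le\sqrt r$ gives $|s_j|\asymp 1/\sqrt r$ rather than $1/r$, your bound should read $|s_j\deg p\,\log q|\ll w/\sqrt r$, and you must then check that the product over the \emph{growing} set $\{p\mid W\}$ still contributes only a $1+o(1)$ factor (as the paper does via $\sum_{p\mid W}\log|p|/(|p|\sqrt r)\ll w/\sqrt r = o(1)$); likewise, for the asymptotic you need the residual product $\prod_{p\nmid W}(1+O(|p|^{-2}))$ to be $1+o(1)$, not merely $O(1)$, which follows since every such $p$ has $|p|\ge q^w$.
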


\begin{proof}[Proof of \autoref{eulerprod}] Note that we have
	\[
		K_p(x,x') = - \frac{1}{\abs{p}^{1 + \frac{1 + ix}{r \log q}}} - \frac{1}{\abs{p}^{1 + \frac{1 + ix'}{r \log q}}} + \frac{1}{\abs{p}^{1 + \frac{2 + ix + ix'}{r \log q}}}.
	\]
	Then we have the estimate
	\[
		1 + K_p(x,x') = \left(1 + O\left(\frac{1}{\abs{p}^2}\right)\right) \frac{\left(1 - \abs{p}^{-1 - \frac{1 + ix}{r \log q}}\right)\left(1 - \abs{p}^{-1 - \frac{1 + ix'}{r \log q}}\right)}{1 - \abs{p}^{-1 - \frac{2 + ix + ix'}{r \log q}}},
	\]
	which can be verified by expanding $\left(1 - \abs{p}^{-1 - \frac{2 + ix_j + ix_j'}{r \log q}}\right)^{-1}$ as a geometric series. Then from
	\[
		\prod_{\substack{p \in \mathcal{P} \\ p \nmid W}} \left(1 + O\left(\frac{1}{\abs{p}^2}\right)\right) = 1+ o(1),
	\]
	we have the estimate
	\begin{equation}\label{modifiedzeta}
		K(x,x') = (1 + o(1)) \frac{\zeta_W\left(1 + \frac{2 + ix + ix'}{r \log q}\right)}{\zeta_W\left(1 + \frac{1 + ix}{r \log q}\right)\zeta_W\left(1 + \frac{1 + ix'}{r \log q}\right)},
	\end{equation}
	where we define the modified zeta function $\zeta_W$ in the range $\Re(s) > 1$ by
	\[
		\zeta_W(s) = \prod_{\substack{p \in \mathcal{P} \\ p \nmid W}} \left(1 - \frac{1}{\abs{p}^s}\right)^{-1}.
	\]
	In the range $\Re(s) \geq 1 + \frac{1}{r\log q}$, we have both $\abs{\zeta_W(s)}, \abs{\zeta_W(s)^{-1}} \ll r$, establishing the estimate $K(x,x') = O(r^3)$.  To establish our claimed asymptotic, we need to compare $\zeta_W(s)$ to $\zeta(s)$ by accounting for the primes $p \mid W$. Restricting our attention to $\abs{x} \leq \sqrt{r}$, we have the estimate
\[
	1 - \frac{1}{\abs{p}^{1 + \frac{1 + ix}{r \log q}}} = 1 - \frac{1}{\abs{p}} + O\left(\frac{\log \abs{p}}{\abs{p}\sqrt{r}}\right),
\]
in which case
\[
	\prod_{\substack{p \in \mathcal{P} \\ p \mid W}} \left(1 - \frac{1}{\abs{p}^{1 + \frac{1 + ix}{r \log q}}}\right) = \frac{\varphi(W)}{\abs{W}} \exp\left(O\left(\sum_{p \mid W} \frac{\log\abs{p}}{\abs{p}\sqrt{r}}\right)\right) = (1 + o(1))\frac{\varphi(W)}{\abs{W}}.
\]
Using this to replace $\zeta_W$ by $\zeta$ in \eqref{modifiedzeta}, we can apply the asymptotic
\[
	\zeta(s) = \left(\frac{1}{\log q} + o(1)\right) \frac{1}{s - 1}
\]
from the pole at $s = 1$, certainly valid for $s = 1 + O(1/r)$, to obtain our claimed asymptotic.
\end{proof}

The following lemma contains the key asymptotics used in the proof of \autoref{npsums}.

\begin{lem}\label{asymptotic} Set
\begin{align*}
	S_3 &= \sum_{d_1, \ldots, d_k, d_1', \ldots, d_k'} \frac{\lambda_{d_1, \ldots, d_k} \lambda_{d_1', \ldots, d_k'}}{\prod_{j = 1}^k |[d_j, d_j']|}\\
	S_4 &= \sum_{d_2, \ldots, d_k, d_2', \ldots, d_k'} \frac{\lambda_{1, d_2, \ldots, d_k} \lambda_{1, d_2', \ldots, d_k'}}{\prod_{j = 2}^k \varphi([d_j, d_j'])}
\end{align*}
Then we have the asymptotics
\[
	S_3 = (\alpha_F + o(1)) \left(\frac{\abs{W}}{\varphi(W) r}\right)^k  \hspace{44pt} S_4 = (\beta_F + o(1)) \left(\frac{\abs{W}}{\varphi(W) r}\right)^{k - 1} ,
\]
where
\[
	\alpha_F = \int_{[0,\infty)^k} \frac{\partial F}{\partial t_1 \cdots \partial t_k} (\mathbf{t})^2 \;d\mathbf{t} \hspace{44pt} \beta_F = \int_{[0,\infty)^{k - 1}} \frac{\partial F}{\partial t_2 \cdots \partial t_k} (0, t_2, \ldots, t_k)^2\;dt_2 \cdots dt_k.
\]
\end{lem}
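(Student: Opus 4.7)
The plan is to use a Fourier/Laplace inversion to decouple the $d_j$-dependence and reduce $S_3, S_4$ to Euler products that can be evaluated using \autoref{eulerprod}. Since $F$ is smooth and supported on the $k$-simplex, a standard inversion (as in \cite{taoblog, polymath8b}) produces a rapidly decaying $\tilde{F} \colon \R^k \to \C$ with
\[
	F(t_1, \ldots, t_k) = \int_{\R^k} \tilde{F}(\mathbf{x}) \prod_{j=1}^k e^{-(1+ix_j)t_j}\,d\mathbf{x} \qquad (\mathbf{t} \in [0,\infty)^k);
\]
using $e^{-(1+ix_j)\deg(d_j)/r} = \abs{d_j}^{-(1+ix_j)/(r\log q)}$, substitution into $\lambdad$ separates the $d_j$-dependence into exactly the Dirichlet factors appearing in \autoref{eulerprod}.

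Substituting into $S_3$ and interchanging sum and integral (justified by decay of $\tilde{F}$) yields
\[
	S_3 = \int_{\R^{2k}} \tilde{F}(\mathbf{x})\tilde{F}(\mathbf{x}') T_3(\mathbf{x}, \mathbf{x}')\,d\mathbf{x}\,d\mathbf{x}',
\]
where $T_3$ is a multiplicative sum in the $d_j, d_j'$. The pairwise coprimality of $[d_1,d_1'],\ldots,[d_k,d_k'],W$ forces at most one index $j$ to contribute at each prime $p \nmid W$, so the local factor at $p$ is $1 + \sum_j K_p(x_j,x_j')/\abs{p}$. Comparing with $\prod_j (1 + K_p(x_j,x_j')/\abs{p})$ introduces errors $O(\abs{p}^{-2})$ per prime that collapse globally to $1+o(1)$, giving $T_3 = (1+o(1))\prod_j K(x_j, x_j')$. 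The bound $K = O(r^3)$ handles the tail $\max(\abs{x_j}, \abs{x_j'}) > \sqrt{r}$ via decay of $\tilde{F}$, and applying the asymptotic from \autoref{eulerprod} on the remaining region produces
\[
	S_3 = (1+o(1))\left(\frac{\abs{W}}{\varphi(W)r}\right)^k \int_{\R^{2k}} \tilde{F}(\mathbf{x})\tilde{F}(\mathbf{x}') \prod_{j=1}^k \frac{(1+ix_j)(1+ix_j')}{2+ix_j+ix_j'}\,d\mathbf{x}\,d\mathbf{x}'.
\]

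To identify this integral with $\alpha_F$, I would use $\frac{(1+ix)(1+ix')}{2+ix+ix'} = (1+ix)(1+ix')\int_0^\infty e^{-(2+ix+ix')u}\,du$ together with Fubini; each factor $(1+ix_j)$ becomes a $u_j$-derivative on the inversion formula for $F$, so that
\[
	\int_{\R^k} \tilde{F}(\mathbf{x})\prod_j (1+ix_j) e^{-(1+ix_j)u_j}\,d\mathbf{x} = (-1)^k \frac{\partial^k F}{\partial t_1 \cdots \partial t_k}(\mathbf{u}),
\]
and the integral collapses to $\int_{[0,\infty)^k} \left(\frac{\partial^k F}{\partial t_1 \cdots \partial t_k}(\mathbf{u})\right)^2 d\mathbf{u} = \alpha_F$. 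For $S_4$ the computation is parallel: replacing $\abs{[d_j,d_j']}$ by $\varphi([d_j,d_j'])$ perturbs each local factor by $1 + O(\abs{p}^{-2})$ and leaves the asymptotic structure intact, while fixing $d_1 = d_1' = 1$ removes the $j=1$ variables from both the Euler product and the integral, so the same integration-by-parts argument yields $\beta_F$.

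The main obstacle is justifying the inversion formula with enough decay on $\tilde{F}$ to control the sum-integral interchange and the tail truncation, particularly since $F$ need not vanish on $\partial[0,\infty)^k$; this is handled by integrating by parts in the inversion formula to trade smoothness of $F$ for decay of $\tilde{F}$. Once the right representation is in place, the Euler product comparison and the identification of the main integrals with $\alpha_F$ and $\beta_F$ proceed mechanically.
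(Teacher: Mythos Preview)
Your proposal is correct and follows essentially the same route as the paper: the paper likewise uses the Fourier representation $F(\mathbf{t}) = \int_{\R^k} \tilde{F}(\mathbf{x})\prod_j e^{-(1+ix_j)t_j}\,d\mathbf{x}$ (obtained by extending $\prod_j e^{t_j}F(\mathbf{t})$ to a smooth compactly supported function on $\R^k$), substitutes to factor $S_3$ into the Euler products $K(x_j,x_j')$, truncates via the $O(r^3)$ bound and rapid decay, inserts the asymptotic from \autoref{eulerprod}, and identifies the resulting integral with $\alpha_F$ by differentiating the Fourier expansion. Your treatment of the pairwise coprimality constraint (writing the local factor as $1 + \sum_j K_p(x_j,x_j')/\abs{p}$ and comparing to the product up to $O(\abs{p}^{-2})$) is in fact more explicit than the paper, which passes to $\prod_j K(x_j,x_j')$ without comment; the $S_4$ argument with $\varphi$ replacing $\abs{\cdot}$ is handled identically in both.
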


\begin{proof}[Proof of \autoref{asymptotic}]
	We begin by extending the function $\mathbf{t} \mapsto \left(\prod_{j = 1}^k e^{t_j}\right) F(\mathbf{t})$ to a smooth, compactly supported function on all of $\R^k$, so that we have the Fourier expansion
	\begin{equation}\label{ffourier}
		\left(\prod_{j = 1}^k e^{t_j}\right) F(\mathbf{t}) = \int_{\R^k} \left(\prod_{j = 1}^k e^{-it_jx_j}\right) \tilde{F}(\mathbf{x})\;d\mathbf{x},
	\end{equation}
	where $\tilde{F} : \R^k \rightarrow \C$ is rapidly decreasing in the sense that $\tilde{F}(\mathbf{x}) = O\left((1 + \abs{\mathbf{x}})^{-A}\right)$ for any fixed $A > 0$ and all $\mathbf{x} \in \R^k$.  Then from $\deg(d_j)/r = \log |d_j| / (r\log q)$ and \eqref{ffourier} we can write 
	\begin{equation}\label{frfourier}
		F\left(\frac{\deg(d_1)}{r}, \ldots, \frac{\deg(d_k)}{r}\right) = \int_{\R^k} \left(\prod_{j = 1}^k \abs{d_j}^{-\frac{1 + ix_j}{r \log q}}\right) \tilde{F}(\mathbf{x})\;d\mathbf{x}.
	\end{equation}
	Note that, uniformly in $x, x'$, we have the estimate
	\begin{align*}
		\left| \sum_{d, d' } \frac{\mu(d)\mu(d')}{|[d,d']| |d|^{\frac{1 + ix}{r \log q}}| |d'|^{\frac{1 + ix'}{r \log q}}} \right|	&\leq \prod_{p \in \mathcal{P}} \left(1 + \frac{2}{|p|^{1 + \frac{1}{r\log q}}} + \frac{1}{|p|^{1 + \frac{2}{r\log q}}}\right)\\
		&\leq \exp\left(O(\log r)\right).
	\end{align*}
	Then we are justified in using \eqref{frfourier} and Fubini to write
	\[
		S_3 = \int_{\R^k}\int_{\R^k} \left(\prod_{j = 1}^k \sum_{d, d'} \frac{\mu(d)\mu(d')}{|[d,d']| |d|^{\frac{1 + ix_j}{r \log q}}| |d'|^{\frac{1 + ix_j'}{r \log q}}} \right) \tilde{F}(\mathbf{x})\tilde{F}(\mathbf{x}')\;d\mathbf{x}\;d\mathbf{x'}.
	\]
	Recall that we're restricting our attention to $d, d'$ with $W$ and $[d,d']$ coprime.  Then we can factor the above sums in $d,d'$ as Euler products to obtain
	\[
		S_3 = \int_{\R^k} \int_{\R^k} \left(\prod_{j = 1}^k K(x_j, x_j')\right) \tilde{F}(\mathbf{x})\tilde{F}(\mathbf{x'})\;d\mathbf{x}\;d\mathbf{x'},
	\]
	where $K$ is as in \autoref{eulerprod}. Using our uniform bound on $K$ and the rapid decrease of $\tilde{F}$, we have
	\[
		S_3 = (1 + o(1))\int_{|\mathbf{x}| \leq \sqrt{r}} \int_{|\mathbf{x'}| \leq \sqrt{r}} \left(\prod_{j = 1}^k K(x_j, x_j')\right) \tilde{F}(\mathbf{x})\tilde{F}(\mathbf{x'})\;d\mathbf{x}\;d\mathbf{x'}.
	\]
	The asymptotic for $S_3$ is then established by inserting our asymptotic for $K$, extending the integrals back to all of $\R^k$ (at the cost of another $o(1)$ term) and the observation that
	\begin{equation}\label{alphaidentity}
	\int_{\R^k} \int_{\R^k} \left(\prod_{j = 1}^k \frac{(1 + ix_j)(1 + ix_j')}{2 + ix_j + ix_j'}\right) \tilde{F}(\mathbf{x})\tilde{F}(\mathbf{x}')\;d\mathbf{x}\;d\mathbf{x'} = \alpha_F.
	\end{equation}
Note that \eqref{alphaidentity} can be verified by differentiating \eqref{ffourier} once in each variable and applying Fubini.

The asymptotic for $S_4$ is established in a very similar way.  Defining $E : [0,\infty)^{k - 1} \rightarrow \R$ by $E(\mathbf{t}) = F(0, t_2, \ldots, t_k)$, we can proceed as before to obtain
\[
	S_4 = \int_{\R^{k - 1}}\int_{\R^{k - 1}} \left(\prod_{j = 2}^k \sum_{d, d'} \frac{\mu(d)\mu(d')}{\varphi([d,d']) |d|^{\frac{1 + ix_j}{r \log q}}|d'|^{\frac{1 + ix_j'}{r \log q}}} \right) \tilde{E}(\mathbf{x})\tilde{E}(\mathbf{x}')\;d\mathbf{x}\;d\mathbf{x'}.
\]
Expanding the inner sum as an Euler product yields
\[
	\prod_{\substack{p \in \mathcal{P} \\ p \nmid W}} \left(1 + \frac{K_p(x_j, x_j')}{\abs{p} - 1}\right),
\]
where $K_p$ is as in \autoref{eulerprod}.  We can replace each $\abs{p} - 1$ with $\abs{p}$ at the cost of a negligible error for
\[
	S_4 = (1 + o(1)) \int_{\R^{k - 1}}\int_{\R^{k - 1}} \left(\prod_{j = 2}^k K(x_j, x_j')\right) \tilde{E}(\mathbf{x})\tilde{E}(\mathbf{x'})\;d\mathbf{x}\;d\mathbf{x'}.
\]
Then the asymptotic for $S_4$ follows in the same way as that for $S_3$.
\end{proof}

With \autoref{eulerprod} and \autoref{asymptotic} established, we are now in a position to prove \autoref{npsums}.

\begin{proof}[Proof of \autoref{npsums}]
First expand the square in $S_1$ and switch the order of summation to obtain
\[
	S_1 = \sum_{d_1, \ldots, d_k, d_1', \ldots, d_k'} \lambda_{d_1, \ldots, d_k} \lambda_{d_1', \ldots, d_k'} \sum_{\substack{f \in \A_n \\ f \equiv b \mod{W} \\ [d_j,d_j'] \mid f + h_j \forall j}} 1.
\]
Recall that we are only considering $d_j, d_j'$ with $[d_1, d_1'], \ldots, [d_k, d_k'], W$ pairwise coprime.  Since $F$ is supported on the $k$-simplex, we can further restrict our attention to $d_j, d_j'$ with
\[
	\sum_{j = 1}^k \deg([d_j,d_j']) \leq 2r < n.
\]
As we've arranged for $\deg(W) \ll \log n$, we can take $w$ sufficiently large with respect to $\eta$ and apply the Chinese remainder theorem for the count
\[
	\sum_{\substack{f \in \A_n \\ f \equiv b \mod{W} \\ [d_j,d_j'] \mid f + h_j}} 1 = \frac{\abs{\A_n}}{\abs{W\prod_{j = 1}^k [d_j,d_j']}}.
\]
The asymptotic for $S_1$ then follows immediately from \autoref{asymptotic}.  For $S_2$, we begin similarly with
\[
	S_2 = \sum_{d_1, \ldots, d_k, d_1', \ldots, d_k'} \lambda_{d_1, \ldots, d_k} \lambda_{d_1', \ldots, d_k'} \sum_{\substack{f \in \A_n \\ f \equiv b \mod{W} \\ [d_j,d_j'] \mid f + h_j \forall j}} \theta(f + h)
\]
We take $h = h_1$ for simplicity; the other cases follow from the symmetry of $F$.  From the support of $F$, we certainly only need to consider $d_1, d_1'$ with $\deg([d_1, d_1']) \leq 2r < n$.  In particular, when $f + h_1$ is prime, the only nonzero summands are when $d_1 = d_1' = 1$.  Then we're left with estimating
\[
	S_2 = \sum_{d_2, \ldots, d_k, d_2', \ldots, d_k'} \lambda_{1, d_2, \ldots, d_k} \lambda_{1, d_2, \ldots, d_k} \sum_{\substack{f \in \A_n \\ f \equiv b \mod{W} \\ [d_j,d_j'] \mid f + h_j \forall j}} \theta(f + h_1),
\]
Again using our coprimality conditions and the Chinese remainder theorem, the inner sum is restricted to a single congruence class modulo $W \prod_{j = 2}^k [d_j,d_j']$.  By the strong analogue of Dirichlet's theorem, we have that the number of primes in such a congruence class is
\[
	\frac{1}{\varphi(W) \prod_{j = 2}^k \varphi([d_j,d_j'])} \cdot \frac{q^n}{n} + O\left(\frac{q^{n/2}}{n}\right).
\]
Note here that it is crucial that we use the error bounds provided by the Riemann hypothesis for curves. Then we have the estimate
\[
S_2 =  \sum_{d_2, \ldots, d_k, d_2', \ldots, d_k'} \lambda_{1, d_2, \ldots, d_k} \lambda_{1, d_2', \ldots, d_k'} \left(\frac{q^n}{\varphi(W) \prod_{j = 2}^k \varphi([d_j,d_j'])} + O\left(q^{n/2}\right)\right)
\]
From \autoref{asymptotic}, the main term for $S_2$ is as we claim.  For the error term, it is enough to note that
\[
	 \sum_{d_2, \ldots, d_k, d_2', \ldots, d_k'} \lambda_{1,d_2, \ldots, d_k} \lambda_{1, d_2', \ldots, d_k'} \ll q^r.
\]
Indeed, the support of $F$ allows us to only consider the $O(q^r)$ choices of $d_j, d_j'$ for which we have $\sum_{j = 2}^k \deg(d_j), \sum_{j = 2}^k \deg(d_j') \leq r$. This establishes the asymptotic for $S_2$ as $r < n/2$.
\end{proof}

Finally, we prove \autoref{concentration} using a similar (but more delicate) argument.

\begin{proof}[Proof of \autoref{concentration}]
We begin by fixing some prime $g \in \F_q[t]$ and analyzing the sum
\[
	S_g = \sum_{\substack{f \in \A_n \\ f \equiv b \mod{W} \\ g \mid f + h_1}} \left(\sum_{\substack{d_1, \ldots, d_k \\ d_j \mid f + h_j}} \lambda_{d_1, \ldots, d_k} \right)^2
\]
From our choice of $b$, this is zero whenever $g \mid W$, so we will only consider $g \nmid W$. Arguing as in the proof of \autoref{npsums}, we expand the square in $S_g$ and obtain
\[
	S_g = \sum_{d_1, \ldots, d_k, d_1', \ldots, d_k'} \lambda_{d_1, \ldots, d_k}\lambda_{d_1', \ldots, d_k'} \sum_{\substack{f \in \A_n \\ f \equiv b \mod{W} \\ [d_j,d_j'] \mid f + h_j \forall j \\ g \mid f + h_1}} 1.
\]
For the remainder of the argument, we will let the variable $D$ denote a squarefree, monic polynomial in $\F_q[t]$ with $g \nmid D$.  Separating the terms where $g \mid [d_1, d_1']$ and $g \nmid [d_1, d_1']$, we have
\begin{align*}
	S_g = &\sum_{D} \sum_{\substack{d_1, \ldots, d_k, d_1', \ldots, d_k' \\ [d_1,d_1'] = D}} \lambda_{d_1, \ldots, d_k}\lambda_{d_1', \ldots, d_k'} \sum_{\substack{f \in \A_n \\ f \equiv b \mod{W} \\ [d_j,d_j'] \mid f + h_j \forall j \\ g \mid f + h_1}} 1 + \sum_{\substack{d_1, \ldots, d_k, d_1', \ldots, d_k'\\ [d_1,d_1'] = gD}} \lambda_{d_1, \ldots, d_k}\lambda_{d_1', \ldots, d_k'} \sum_{\substack{f \in \A_n \\ f \equiv b \mod{W} \\ [d_j,d_j'] \mid f + h_j \forall j}} 1
\end{align*}
Note that since $g \mid f + h_1$, our choice of $b$ ensures that $g \nmid f + h_j$ for $j \neq 1$. That is, the condition $g \mid f + h_1$ ensures that $g$ and $[d_j, d_j']$ are coprime for $j \neq 1$. Applying our coprimality conditions and the Chinese remainder theorem, 
\[
S_g = \frac{\abs{\A_n}}{\abs{W}}\sum_{D} \frac{1}{\abs{g}}\sum_{\substack{d_1, \ldots, d_k, d_1', \ldots, d_k'  \\ [d_1,d_1'] = D}} \frac{\lambda_{d_1, \ldots, d_k}\lambda_{d_1', \ldots, d_k'}}{\prod_{j = 1}^k \abs{[d_j,d_j']}}
	+ \sum_{\substack{d_1, \ldots, d_k, d_1', \ldots, d_k' \\ [d_1,d_1'] = gD}} \frac{\lambda_{d_1, \ldots, d_k}\lambda_{d_1', \ldots, d_k'}}{\prod_{j = 1}^k \abs{[d_j,d_j']}}
\]
Using a Fourier expansion as in the proof of \autoref{asymptotic}, note that we have
\[
\sum_{\substack{d_1, \ldots, d_k, d_1', \ldots, d_k' \\ [d_1,d_1'] = D}} \frac{\lambda_{d_1, \ldots, d_k}\lambda_{d_1', \ldots, d_k'}}{\prod_{j = 1}^k \abs{[d_j,d_j']}} = \int_{\R^k}\int_{\R^k} \frac{K_D(x_1, x_1')}{|D|}\prod_{j = 2}^k K(x_j, x_j') \tilde{F}(\mathbf{x})\tilde{F}(\mathbf{x}')\;d\mathbf{x}\;d\mathbf{x}',
\]
where $K_D$ and $K$ are as in \autoref{eulerprod}.  Since we're only considering $D$ with $g \nmid D$, we have $K_{gD} = K_g K_D$ and the Euler product expansion 
\[
	\sum_{D} \frac{K_D(x_1,x_1')}{|D|} = K(x_1,x_1')\left(1 + \frac{K_g(x_1,x_1')}{\abs{g}}\right)^{-1}.
\]
Then by moving the sum in $D$ in our previous expression for $S_g$ inside the integrals from the Fourier expansions, we arrive at
\[
S_g = \frac{\abs{\A_n}}{\abs{gW}} \int_{\R^k} \int_{\R^k} \frac{1 + K_g(x_1,x_1')}{1 + K_g(x_1, x_1')/\abs{g}} \prod_{j = 1}^k K(x_j,x_j') \tilde{F}(\mathbf{x})\tilde{F}(\mathbf{x}')\;d\mathbf{x}\;d\mathbf{x'}
\]
Arguing as in \autoref{asymptotic}, we arrive at
\[
	S_g = \left(1 + o(1)\right)\frac{I_g}{\abs{g}}\frac{\abs{W}^{k - 1}}{\varphi(W)^k} \frac{\abs{\A_n}}{r^k},
\]
where
\[
	I_g = \int_{|\mathbf{x}| \leq \sqrt{r}} \int_{|\mathbf{x'}| \leq \sqrt{r}} \frac{1 + K_g(x_1,x_1')}{1 + K_g(x_1, x_1')/\abs{g}} \prod_{j = 1}^k \frac{(1 + ix_j)(1 + ix_j')}{2 + ix_j + ix_j'} \tilde{F}(\mathbf{x})\tilde{F}(\mathbf{x'})\;d\mathbf{x}\;d\mathbf{x'}.
\]
We need to estimate
\[
	\frac{1 + K_g(x_1,x_1')}{1 + K_g(x_1, x_1')/\abs{g}} = \frac{\left(1 - \abs{g}^{-\frac{1 + ix_1}{r \log q}}\right)\left(1 - \abs{g}^{-\frac{1 + ix_1'}{r \log q}}\right)}{1 - \left(\abs{g}^{-1 - \frac{1 + ix_1}{r \log q}} + \abs{g}^{-1 - \frac{1 + ix_1}{r \log q}} - \abs{g}^{-1 - \frac{2 + ix_1 + ix_1'}{r \log q}}\right)}
\]
From
\[
	\frac{1}{1 - \left(\abs{g}^{-1 - \frac{1 + ix_1}{r \log q}} + \abs{g}^{-1 - \frac{1 + ix_1'}{r \log q}} - \abs{g}^{-1 - \frac{2 + ix_1 + ix_1'}{r \log q}}\right)} = 1 + O\left(\frac{1}{\abs{g}}\right) = 1 + o(1),
\]
and
\[
	\abs{g}^{-\frac{1 + ix_1}{r \log q}} = 1 - \frac{\deg(g)}{r} (1 + ix_1) + O\left( \frac{\deg(g)^2}{r^2} (1 + ix_1)^2\right),
\]
we have (using $\abs{x_1}, \abs{x_1'} \leq \sqrt{r}$)
\[
	\frac{1 + K_g(x_1,x_1')}{1 + K_g(x_1, x_1')/\abs{g}} = (1 + o(1))\frac{\deg(g)^2}{r^2}(1 + ix)(1 + ix')
\]
Our asymptotic for $S_g$ becomes
\[
	S_g = (\gamma_F + o(1))\frac{\deg(g)^2}{\abs{g}r^2} \frac{\abs{W}^{k - 1}}{\varphi(W)^k} \frac{\abs{\A_n}}{r^k}
\]
where 
\[
	\gamma_F = \int_{[0,\infty)^k} \frac{\partial F}{(\partial t_1)^2 \partial t_2 \cdots \partial t_k}(\mathbf{t})^2\;d\mathbf{t}.
\]
Note that one can verify the identity
\[
	\gamma_F = \int_{\R^k} \int_{\R^k} (1 + ix_1)(1 + ix_1') \prod_{j = 1}^k \frac{(1 + ix_j)(1 + ix_j')}{2 + ix_j + ix_j'} \tilde{F}(\mathbf{x})\tilde{F}(\mathbf{x'})\;d\mathbf{x}\; d\mathbf{x}'.
\]	
as before by differentiating \eqref{ffourier} twice in $t_1$ and once in each other variable.  It remains to sum $S_g$ for primes $g$ with $\deg(g) \leq \epsilon n$.  By the symmetry of $F$, we have the estimate
\[
	\sum_{\substack{f \in \A_n \\ f \equiv b \mod{W} \\ P^-\left(\prod_{j = 1}^k (f + h_j)\right) \leq \epsilon n}} \left(\sum_{\substack{d_1, \ldots, d_k \\ d_j \mid f + h_j}} \lambda_{d_1, \ldots, d_k} \right)^2 \ll  \frac{\abs{W}^{k - 1}}{\varphi(W)^k} \frac{(\log q)^k}{r^k} \abs{\A_n} \sum_{ \substack{g \in \mathcal{P} \\ \deg(g) \leq \epsilon n} } \frac{\deg(g)^2}{\abs{g}r^2}.
\]
For $\epsilon < \eta$, we can apply the prime number theorem to see
\[
	\sum_{\substack{g \in \mathcal{P} \\ \deg(g) \leq \epsilon n}} \frac{\deg(g)^2}{\abs{g}r^2} \ll \frac{\epsilon n}{r} \ll \epsilon
\]
completing the proof.

\end{proof}

\section{The Transference Argument}\label{transsection}

For this section, we consider the quotient of $\F_q[t]$ by a fixed monic polynomial of degree $n$, so that we can make the identification between $\{f \in \F_q[t] : \deg(f) < n\}$ and $\F_{q^n}$.  Addition in these spaces agree under this identification, but we will need to be careful when we want multiplication in $\F_{q^n}$ to be valid in $\F_q[t]$.

A major insight of Green and Tao was that, loosely speaking, one should be able to transfer Szemer\'edi's theorem on arithmetic progressions from dense subsets of integers to dense subsets of any sufficiently ``pseudorandom'' subset of integers. This type of transference was established in $\F_q[t]$ by L\^e \cite{le}, but we will apply the more recent ``relative Szemer\'edi theorem'' established by Conlon, Fox, and Zhao \cite{cfz}. We use their notion of pseudorandom to make the following definition.

\begin{defn} We say that a function $\nu : \F_{q^n} \rightarrow [0,\infty)$ is an $\ell$-pseudorandom measure if for any choice of exponents $c_{j,\omega} \in \{0,1\}$, one has
\[
	\E \left[ \prod_{j = 1}^\ell \prod_{\omega \in \{0,1\}^{[1,\ell] \setminus \{j\}}} \nu\left(\sum_{i = 1}^\ell (i - j)x_{i, \omega_i}\right)^{c_{j,\omega}} \right] = 1 + o(1)
\]
where here the expectation is taken over all $x_{1, 0}, x_{1, 1}, x_{2,0}, x_{2,1}, \ldots, x_{\ell,0}, x_{\ell,1} \in \F_{q^n}$.

\end{defn}

In our setting, the relative Szemer\'edi theorem \cite[Theorem 3.1]{cfz} reduces to \autoref{ffgreentao} below.  This is essentially \cite[Theorem 4]{le}, in which $\nu$ was required to satisfy a stronger notion of pseudorandom.  The main breakthrough of \cite{cfz} was to show that one can establish theorems of this type under their weaker notion of pseudorandom.

\begin{thm}\label{ffgreentao}
For every $\ell \in \N$ and $\delta > 0$, there exists $c > 0$ such that if $\nu : \F_{q^n} \rightarrow \R$ is an $\ell$-pseudorandom measure, then for $n$ sufficiently large, we have for every $\phi : \F_{q^n} \rightarrow \R$ with $0 \leq \phi \leq \nu$ and $\E_{f \in \F_{q^n}} \phi(f) \geq \delta$ the count
\[
	\E_{f,g \in \F_{q^n}} \prod_{\substack{h \in \F_q[t] \\ \deg(h) < \ell}} \phi(f + gh) \geq c.
\]
\end{thm}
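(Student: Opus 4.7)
The plan is to invoke \cite[Theorem 3.1]{cfz} directly in $\F_{q^n}$, interpreting the configuration $\{f + gh : \deg(h) < \ell\}$ as the natural analogue of an $\ell$-term arithmetic progression in the polynomial setting: polynomial multiples of the ``common difference'' $g$ by $h$ with $\deg(h) < \ell$ replace the integer multiples $0, 1, \ldots, \ell - 1$ of an ordinary AP. Under this translation, the $q^\ell$-point configuration plays the role of the $\ell$-AP in the CFZ framework, and the count $\E_{f,g} \prod_{\deg(h) < \ell} \phi(f + gh)$ is the corresponding pattern count.

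First I would set up the CFZ framework in $\F_{q^n}$: viewing $\F_{q^n}$ as an abelian group under addition, for each fixed $h$ the map $(f,g) \mapsto f + gh$ is a linear form in $(f,g) \in \F_{q^n}^2$. Then I would verify that the $\ell$-pseudorandom condition of this paper is precisely the CFZ linear forms condition needed to run their dense model and counting lemmas for this system. The variables $x_{i,\omega_i}$ and the forms $\sum_{i=1}^{\ell} (i-j) x_{i,\omega_i}$ are the ``octahedral'' forms CFZ use, transplanted to $\F_{q^n}$ with integer coefficients interpreted via the characteristic.

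Given the pseudorandomness, CFZ's dense model theorem produces $\tilde{\phi} : \F_{q^n} \to [0,1]$ with $\E \tilde{\phi} \geq \delta - o(1)$ and a counting lemma yielding
\[
\E_{f,g \in \F_{q^n}} \prod_{\substack{h \in \F_q[t] \\ \deg(h) < \ell}} \phi(f + gh) = \E_{f,g \in \F_{q^n}} \prod_{\substack{h \in \F_q[t] \\ \deg(h) < \ell}} \tilde{\phi}(f + gh) + o(1).
\]
Applying an unrelativized Szemer\'edi-type theorem in $\F_{q^n}$ for these polynomial configurations to $\tilde{\phi}$---a bounded function of density at least $\delta/2$---gives a positive lower bound $c(\ell, \delta) > 0$ for the right-hand side, which suffices.

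The hard part will be checking that the paper's relatively mild $\ell$-pseudorandom condition really does suffice to count the $q^\ell$-point polynomial configuration (not merely ordinary $\ell$-APs inside $\F_{q^n}$). This is exactly the weakening that CFZ buy over the original L\^e and Green--Tao pseudorandomness hypotheses: one must track that the CFZ counting lemma only ever invokes linear forms of the shape $\sum_{i=1}^\ell (i - j) x_{i, \omega_i}$, even though the ambient configuration has $q^\ell$ points. Establishing that all necessary counts reduce to these forms is what allows CFZ's argument to improve on L\^e's Theorem 4, and is the only place the specific form of the pseudorandomness condition enters.
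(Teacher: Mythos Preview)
Your approach is the same as the paper's: the paper does not give an independent proof of \autoref{ffgreentao} but simply asserts that it is the specialization of \cite[Theorem 3.1]{cfz} to $\F_{q^n}$, noting that this strengthens \cite[Theorem 4]{le} by weakening the required pseudorandomness hypothesis. Your sketch of the CFZ machinery (dense model, counting lemma, appeal to the unrelativized Szemer\'edi-type theorem for these configurations in $\F_{q^n}$) is exactly the argument the paper is invoking by citation, and you correctly isolate the one genuinely nontrivial point---that the stated $\ell$-linear-forms condition controls the full $q^\ell$-point configuration---which the paper leaves entirely implicit in its definition of ``$\ell$-pseudorandom.''
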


\begin{rem}
	Here by $n$ sufficiently large, we mean with respect to $\delta$, $\ell$, and the $o(1)$ term in the definition of $\ell$-pseudorandom.  
\end{rem}

Remember that our goal is to show that $\mathcal{A}$ from \autoref{positivedensity} contains large $\ell$-configurations.  It is tempting at this stage to try to apply \autoref{ffgreentao} with $\phi = 1_\mathcal{A}$.  This approach is doomed to fail, since for any $\mathcal{H}$ and $\epsilon > 0$, the set $\mathcal{P}_\epsilon(\mathcal{H})$ has density zero in $\F_q[t]$.  To remedy this, we need to show that we can build a pseudorandom measure $\nu$ that controls an appropriately weighted version of $1_\mathcal{A}$.  This is contained in the following theorem.

\begin{thm}\label{findmeasure}
	Let $\mathcal{A} \subseteq \{f \in \mathcal{P}_\epsilon(\mathcal{H}) \cap \A_{n} : f \equiv b \mod{W}\}$ and $a \in (0,1)$ with
	\[
		\abs{\mathcal{A}} \geq a \frac{\abs{W}^{k - 1}}{\varphi(W)^k} \frac{\abs{\A_n}}{n^k} .
	\]
	Then there exists some $\delta > 0$, an $\ell$-pseudorandom measure $\nu : \F_{q^n} \rightarrow [0,\infty)$, and a function $\phi : \F_{q^n} \rightarrow [0,\infty)$ such that:
	\begin{enumerate}
	\item[(i)] $\phi(f) = 0$ unless $Wf + b \in \mathcal{A}$ when we consider $f \in \F_q[t]$
	\item[(ii)] $\phi(f) \leq \nu(f)$ for all $f \in \F_{q^n}$
	\item[(iii)] $\E_{f \in \F_{q^n}} \phi(f) \geq \delta$
	\item[(iv)] $\norm{\phi}_\infty \leq n^k$.
\end{enumerate}
\end{thm}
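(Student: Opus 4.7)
The plan is to take $\phi$ to be a scaled indicator of the preimage of $\mathcal{A}$ under the $W$-trick map $f \mapsto Wf + b$, and to take $\nu$ to be a Goldston--Pintz--Y\i ld\i r\i m / Selberg sieve majorant for the $k$-tuple $\mathcal{H}$, appropriately normalized. Conditions (i)--(iv) on $\phi$ will follow routinely from the density hypothesis on $\mathcal{A}$ and the almost-prime structure of $\mathcal{P}_\epsilon(\mathcal{H})$; the substantive work lies in verifying the Conlon--Fox--Zhao linear-forms condition defining $\ell$-pseudorandom measure for $\nu$.

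Fix a smooth cutoff $\chi : [0,\infty) \to \R$ with $\chi(0) = 1$ and $\mathrm{supp}(\chi) \subset [0, \epsilon/(2\eta))$, set $r = \eta n$, and define
\[
\nu(f) = C \cdot r^k \left( \sum_{\substack{d \mid \prod_{j=1}^k (Wf + b + h_j) \\ (d,W) = 1}} \mu(d)\, \chi\!\left( \frac{\deg d}{r} \right) \right)^{\!2},
\]
with $C = C(W, k, \chi) > 0$ chosen so that $\E_{f \in \F_{q^n}} \nu(f) = 1 + o(1)$; this normalization reduces to a routine Euler-product computation in the style of \autoref{eulerprod}--\autoref{asymptotic}. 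If $Wf + b \in \mathcal{A} \subseteq \mathcal{P}_\epsilon(\mathcal{H})$, every prime factor of $\prod_j(Wf + b + h_j)$ has degree $\geq \epsilon n$; the support condition on $\chi$ forces $\deg d < \epsilon n / 2$ in the divisor sum, so only $d = 1$ contributes and $\nu(f) = C r^k$. Setting $\phi(f) = \lambda \cdot 1_{Wf+b \in \mathcal{A}}(f)$ with $\lambda = \min(C r^k, n^k)$ immediately yields (i), (ii), and (iv). For (iii),
\[
\E_{f \in \F_{q^n}} \phi(f) = \lambda \cdot \frac{|\mathcal{A}|}{q^n} \geq a \lambda \cdot \frac{|W|^{k-1}}{\varphi(W)^k n^k} \geq a \min(C \eta^k, 1) \cdot \frac{|W|^{k-1}}{\varphi(W)^k} =: \delta > 0,
\]
which is a positive constant for fixed $W$, $a$, $C$, $\eta$.

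The main obstacle is the linear-forms condition for $\nu$: for any exponents $c_{j,\omega} \in \{0,1\}$,
\[
\E_x \prod_{j=1}^\ell \prod_{\omega \in \{0,1\}^{[1,\ell] \setminus \{j\}}} \nu\bigl(L_{j,\omega}(x)\bigr)^{c_{j,\omega}} = 1 + o(1),
\]
where $L_{j,\omega}(x) = \sum_{i=1}^\ell (i-j) x_{i,\omega_i}$ and $x$ ranges over $(x_{i,b})_{i \in [1,\ell],\, b \in \{0,1\}} \in \F_{q^n}^{2\ell}$. Expanding each instance of $\nu$ as a squared divisor sum, the expectation becomes a sum over divisor tuples of expectations of indicator functions, which factorize by the Chinese remainder theorem into an Euler product over primes $p \nmid W$. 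The key combinatorial input, as in \cite{cfz}, is that the forms $L_{j,\omega}$ are pairwise affinely independent and that $L_{j,\omega}$ does not depend on any $x_{j,\cdot}$ (since $i - j$ vanishes at $i = j$); this ensures that the local Euler factor at each prime decouples into individual contributions up to an error controlled by the zeta-function asymptotics of \autoref{eulerprod}. The $W$-trick suppresses the contribution of primes of small degree as in the proofs of \autoref{npsums} and \autoref{concentration}. Assembling these local-to-global estimates, and using the normalization of $C$, the entire expectation equals $1 + o(1)$. This is the function-field analogue of the majorant construction in \cite{cfz} refining \cite{le}.
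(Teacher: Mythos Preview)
Your approach is correct in outline and close in spirit to the paper's, but there is one structural difference and one genuine oversight worth flagging.

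\textbf{Different construction of $\nu$.} The paper does not use a single squared divisor sum over $d \mid \prod_j(Wf+b+h_j)$. Instead it takes $\nu(f) = \prod_{j=1}^k \nu_j(f)$ with $\nu_j(f) = \frac{r\varphi(W)}{|W|}\Lambda_r(Wf+b+h_j)^2$, i.e.\ a \emph{product} of $k$ separately normalized single-shift Goldston--Y\i ld\i r\i m weights, using a fresh sieve level $r = \rho n$. The payoff is that each $\nu_j$ is literally L\^e's measure, so the linear-forms verification reduces to rerunning his argument; the only new input is the observation that no prime $p \nmid W$ can divide two different $Wf+b+h_j$ (since $w > \deg h_k$), which forces the local Euler factors to decouple across $j$. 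Your single-sum weight with cutoff $\chi(\deg d / r)$ is instead the classical Green--Tao / Pintz $k$-tuple majorant; it works, but the linear-forms computation must be redone for the joint weight rather than inherited from L\^e. The paper's $\phi$ is also more explicit: it sets $\phi(f) = \frac{r^k\varphi(W)^k}{|W|^k}$ on the preimage of $\mathcal{A}$, with $r = \rho n < n$, so (iv) is automatic and no implicit normalizing constant $C$ or $\min$ is needed.

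\textbf{Missing dependence on $\ell$.} You fix $\mathrm{supp}(\chi) \subset [0,\epsilon/(2\eta))$ purely to force $\nu(f)=Cr^k$ on $\mathcal{A}$, but this pins the effective sieve level at $\frac{\epsilon}{2}n$. For the linear-forms condition the total degree of all divisor variables (two per form, $\ell\cdot 2^{\ell-1}$ forms) must stay below $n$, which requires the sieve level to be small in terms of $\ell$. Since $\epsilon$ is handed to you by the hypotheses of the theorem and was chosen in \autoref{positivedensity} without reference to $\ell$, your construction as written can fail for large $\ell$. The paper fixes this by introducing a separate parameter $\rho$, taken small enough in terms of $\ell$ \emph{and} satisfying $\rho < \epsilon$, and uses $r = \rho n$ for the majorant (this $r$ is unrelated to the $r = \eta n$ of \autoref{setup}). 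You should likewise shrink the support of $\chi$ to $[0,\rho/\eta)$ for an appropriate $\rho = \rho(\ell) < \epsilon$.
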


\begin{rem}
	If $\epsilon = 1$ and $\mathcal{H} = \{0\}$, then $\mathcal{P}_\epsilon(\mathcal{H}) = \mathcal{P}$ and \autoref{findmeasure} reduces to \cite[Theorem 5]{le}.  
\end{rem}

\begin{proof}
We take $G : [0,\infty) \rightarrow \R$ smooth and supported on $[0,1]$ such that $G(0) = 1$ and $\int_0^\infty G'(t)^2 \;dt = 1$.  For $r \in (0,n)$, define the Goldston-Y\i ld\i r\i m divisor sum
\[
	\Lambda_r(f) = \sum_{\substack{d \in \F_q[t] \text{ monic} \\ d \mid f}} \mu(d)G\left(\frac{\deg(d)}{r}\right).
\]

In \cite{le}, L\^e establishes that the measures $\nu_j(f) = \frac{r \varphi(W)}{|W|} \Lambda_r(Wf + b + h_j)^2$
are $\ell$-pseudorandom provided $(b + h_j, W) = 1$ (which we've arranged for) and $r = \rho n$ with $\rho \in (0,1)$ sufficiently small with respect to $\ell$.  We further insist that $\rho < \epsilon$.  We take the measure $\nu(f) = \prod_{j = 1}^k \nu_j(f)$.  The proof that $\nu$ is $\ell$-pseudorandom can be run completely analogously to L\^e's proof that $\nu_j$ satisfies his (stronger) linear forms condition. The only missing ingredient is an analogue of \cite[Proposition 11]{le}.  To establish such an analogue, the only necessary observation is that, for any $p \in \mathcal{P}$ with $p \nmid W$, we cannot have $p \mid Wf + b + h_j$ and $p \mid Wf + b + h_{j'}$ simultaneously for $j \neq j'$.  Indeed, if this were the case, we would have $p \mid h_j - h_{j'}$, which is a contradiction since $w > \deg(h_k)$.  This is essentially the same observation that Pintz made to verify the linear forms condition in his proof of \cite[Theorem 5]{pintz}.

We define $\phi : \F_{q^n} \rightarrow \R$ by
\[
	\phi(f) = \begin{cases} \frac{r^k \varphi(W)^k}{\abs{W}^k} & \text{if } Wf + b \in \mathcal{A} \\ 0 &\text{otherwise} \end{cases}.
\]
It should be immediately clear that $\phi$ satisfies (i) and (iv).  To verify (ii), it is enough to note that for $f \in \F_q[t]$ with $Wf + b \in \mathcal{A}$, we have $\phi(f) = \nu(f)$.  Finally, note that
\[
	\E_{f \in \F_{q^n}} \phi(f) = \frac{1}{q^n} \frac{r^k \varphi(W)^k}{\abs{W}^k} \abs{\{f \in \F_q[t] : Wf + b \in \mathcal{A}\}} \geq \frac{a\rho^k}{\abs{W}},
\]
so by setting $\delta = a\rho^k/\abs{W}$, we've verified (iii).
\end{proof}

\section{Proof of Theorem 2}\label{proofmain}

With the density and transference arguments established, we are ready to prove \autoref{headline}. By \autoref{positivedensity}, we can fix $w \in \N$ sufficiently large and $\epsilon$ sufficiently small to obtain some $a > 0$ such that, for $n$ sufficiently large with respect to $w$, the set
\[
		\mathcal{A} = \left\{f \in \mathcal{P}_\epsilon(\mathcal{H}) \cap \A_n : f \equiv b \mod{W} \text{ and at least } m + 1 \text{ of the } f + h_j \text{ are prime}\right\}
\]
satisfies
\[
		\abs{\mathcal{A}} \geq a \frac{\abs{W}^{k - 1}}{\varphi(W)^k} \frac{\abs{\A_n}}{n^k} .
\]
Then from \autoref{findmeasure}, there exists some $\delta > 0$, an $\ell$-pseudorandom measure $\nu : \F_{q^n} \rightarrow [0,\infty)$, and a function $\phi : \F_{q^n} \rightarrow [0,\infty)$ such that (i) - (iv) of \autoref{findmeasure} hold. If we applied \autoref{ffgreentao} directly, we could use this to locate $\ell$-configurations with multiplication in $\F_{q^n}$, but these are not necessarily $\ell$-configurations in $\F_q[t]$. To remedy this, fix $\ell \in \N$ and partition $\F_{q^n}$ into $q^\ell$ disjoint sets $C_i$, each containing $q^{n - \ell}$ elements with $\deg(f - g) < n - \ell$ for any $f,g \in C_i$. Then $\E_{f \in C_i} \phi(f) \geq \delta$ for some $i$, so we fix this $C_i$ and set $\psi = \phi|_{C_i}$. Note $\psi$ still satisfies (i), (ii), and (iv), and almost satisfies (iii) in the sense that $\E_{f \in \F_{q^n}} \psi(f) \geq \delta / q^\ell$. We now apply \autoref{ffgreentao} to find a constant $c > 0$ such that
\[
	\E_{f, g \in \F_{q^n}} \prod_{\substack{h \in \F_q[t] \\ \deg(h) < \ell}} \psi(f + gh) \geq c.
\]
From (iv), the contribution of the terms where $g = 0$ is negligible, in the sense that we have the same average with nonzero $g$ at least $c/2$ by taking $n$ sufficiently large. Note that if $f \in C_i$ and $f + g \in C_i$, then $\deg(g) < n - \ell$, so we have that $\deg(f + gh) < n$ for all $h$ with $\deg(h) < \ell$.  That is, considering $\psi$ as a function on $\{f \in \F_q[t] : \deg(f) < n\}$, we have shown that
\[
	\sum_{\substack{f,g \in \F_q[t] \\ \deg(f) < n \\ \deg(g) < n,\;g \neq 0}} \prod_{\substack{h \in \F_q[t] \\ \deg(h) < \ell}} \psi(f + gh) > 0.
\]
It follows that we can find $f,g \in \F_q[t]$ with $g \neq 0$ and 
\[
	\mathcal{C}_\ell(f,g) \subseteq \{h \in \F_q[t] : Wh + b \in \mathcal{A}\}.
\]
Then each of the $\ell$-configurations $h_j + \mathcal{C}_\ell(Wf + b, Wg)$ contain only monic elements of degree $n$, and at least $m + 1$ of them contain only irreducibles. As we can produce similar collections of $\ell$-configurations for all sufficiently large $n$, we've established \autoref{headline}.

\section{Concluding Remarks}

A careful reading of the proof of \autoref{headline} indicates that we have proved something slightly stronger.  Indeed, for every sufficiently large degree $n$, we were able to produce the required collections of $\ell$-configurations whose elements were all of degree $n$.  It follows that by adapting the argument of \cite[Theorem 1.4(i)]{chlopt}, one could strengthen \autoref{headline-quant} to be true for \emph{any} monomial of degree $d$.

\emph{Acknowledgments.} The author would like to thank Neil Lyall and \`{A}kos Magyar for many helpful conversations.  This work also benefited from discussions with Paul Pollack and Lee Troupe.

\bibliography{SGBCOPP}{}
\bibliographystyle{abbrv}

\end{document}